\let\d=\partial
\newcommand{\loc}{{\rm loc}\,}
\def\C{\mathop{\mathbb C\kern 0pt}\nolimits}
\def\R{\mathop{\mathbb R\kern 0pt}\nolimits}
\def\N{\mathop{\mathbb N\kern 0pt}\nolimits}
\def\dy{\,{\rm d}y\,}
\def\dx{\,{\rm d}x\,}
\def\dt'{\,{\rm d}t'\,}
\def\ds'{\,{\rm d}s'\,}
\def\tanh{\,{\rm tanh}\,}
\def\cL{\mathcal{L}}
\def\cS{\mathcal{S}}
\newtheorem{ex}{Example}[section]
\newtheorem{thm}{Theorem}[section]
\newtheorem{lem}{Lemma}[section]
\newtheorem{rmk}{Remark}[section]
\newtheorem{col}{Corollary}[section]
\begin{document}

\title[Eigenvalues for the Lax operator]{Eigenvalue analysis of the Lax operator for the one-dimensional cubic nonlinear defocusing Schr\"odinger equation}

   \author[X. Liao]{Xian Liao}
\address[X. Liao]%
 {Institute for Analysis, Karlsruhe Institute of Technology, Englerstrasse 2, 76131 Karlsruhe, Germany}
\email{xian.liao@kit.edu} 
 
\author[M. Plum]{Michael Plum}
\address[M. Plum]%
 {Institute for Analysis, Karlsruhe Institute of Technology, Englerstrasse 2, 76131 Karlsruhe, Germany}
\email{michael.plum@kit.edu} 


  \begin{abstract}
     We characterize the location and   number of eigenvalues for the Lax operator associated to the 
     one-dimensional cubic nonlinear defocusing Schr\"odinger equation. 
     With the help of a newly discovered unitary matrix, the analysis reduces to the study of the spectral problem for a unitarily equivalent operator, which involves only the amplitude and the phase velocity of the potential.
     Examples of potentials with special amplitude and phase velocity are   investigated.
 \end{abstract}
 
\maketitle

\noindent{\sl Keywords:} Cubic nonlinear defocusing Schr\"odinger equation, nonzero boundary condition, Lax operator, one-dimensional Dirac operator, Sturm-Liouville eigenvalue problem, spectral analysis

\vspace{.1cm}

\noindent{\sl AMS Subject Classification (2020):} 35Q55, 37K10\
 
 \bigskip
 
 \section{Introduction}
 We consider the following one-dimensional defocusing cubic nonlinear  Schr\"odinger (NLS) equation
\begin{equation}\label{GP}
i\d_t q+\d_{xx} q=2|q|^2q,
\end{equation} 
where $q=q(t,x):\R\times \R\to \C$ denotes the unknown wave function.
By the seminal paper by Zakharov-Shabat  \cite{ZS71}, the (NLS) equation can be (formally) formulated in the Lax pair form 
\begin{equation}\label{LaxPair}
\d_t L=PL-LP,
\end{equation}
where $L$ denotes the self-adjoint Lax operator   with the potential $q$
\begin{equation}\label{L}
L=L_q=\left(\begin{matrix}
i\d_x&-iq \\ i\bar q&-i\d_x
\end{matrix}\right),
\end{equation} 
and $P$ is the following skewadjoint differential operator 
\begin{equation*}\label{P,GP}
P=i\left(\begin{matrix}
2\partial_x^2-|q|^2&-q\partial_x-\partial_x q
 \\
  \bar q\partial_x+\partial_x\bar q & -2\partial_x^2+|q|^2
\end{matrix}\right).
\end{equation*}
Here the application of the operator $\d_x\bar q$ on a function $f$ is understood as $\d_x(\bar q f)$.
Let $U(t',t)$ be the unitary family generated by the skewadjoint operator $P$, then by virtue of \eqref{LaxPair}, one can relate the operators   $L(t):=L_{q(t,x)}$ and $L(t')=L_{q(t',x)}$ at different times by
$$
L(t)=U^\ast(t', t)L(t')U(t',t),
$$
such that  the spectrum of the Lax operator $L(t)$ is (formally) invariant under the evolutionary NLS-flow \eqref{GP}. 
In particular, the eigenvalues of $L(0)$ at the initial time $t=0$ are   the eigenvalues of $L(t)$ for all the time $t$ (as long as the solution   exists).
In the present paper we will analyze the eigenvalues of the Lax operator $L_q$ for a class of   nowhere vanishing bounded  potentials $q$, and in the following we will largely ignore the time dependence.

In the (classical) setting of decaying potentials: 
$$
q(x)\rightarrow 0\hbox{ as }|x|\rightarrow\infty,
$$
the spectral problem of the Lax operator $L_q$ and the associated direct/inverse scattering transform 
have been  extensively studied  in the literature, cf. the book \cite{AS}.
If one assumes   the nonzero boundary condition   for $q$ at infinity:
\begin{equation}\label{BC}
    |q(x)|\rightarrow 1 \hbox{ as }|x|\rightarrow\infty,
\end{equation} 
the equation \eqref{GP} possesses a family of soliton solutions  $e^{-2it} q_c(x-2ct)$ where
$$
q_c(x)= \sqrt{1-c^2}\,\tanh\bigl(\sqrt{1-c^2}\,x\bigr)+ic ,\quad -1<c<1,
$$ 
and the corresponding Lax operator $L_{q_c}$ has a unique simple eigenvalue   $-c$.
These soliton solutions are called dark/black solitons in nonlinear optics, which travels at the speed $2|c|<2$, and there are no soliton solutions with traveling speed bigger than $2$.
Due to the experimental relevance of the problem \eqref{GP}-\eqref{BC}, the study of the spectral problem of the Lax operator $L_q$ under the assumption  \eqref{BC}  (i.e. the one-dimensional Dirac operator with nonzero rest mass) has   attracted much attention,  cf.  \cite{AKNS, BP, CJ, DZ, DPVV, FT, GZ, ZS73}. 
In particular in the classical framework where
\begin{equation*}\label{FT,BC}
q(x)-1\in  \cS(\R) 
\end{equation*} 
is a Schwartz function, Faddeev-Takhtajan  \cite{FT} studied  the self-adjoint operator $L_q$, and showed that its essential spectrum is $(-\infty, -1]\cup [1,\infty)$ and there are at most countably many \emph{simple real} eigenvalues $\{\lambda_m\}$  inside $(-1,1)$.  
More recently, Demontis et al. \cite{DPVV} studied rigorously the inverse scattering transform if
\begin{align*}
    q(x)\rightarrow e^{i\theta_\pm}\in \mathbb{S}^1\hbox{ as }x\rightarrow\pm\infty 
\end{align*}
sufficiently fast in the sense that $(1+x^2)(q(x)-e^{i\theta_\pm})\in L^1(\R^\pm)$.
Under the stronger decay assumption
\begin{equation}\label{potential,decay}
    (1+x^4)(q(x)-e^{i\theta_\pm})\in L^1(\R^\pm),
\end{equation} 
they showed  that there are only finitely many discrete eigenvalues which belong to the spectral gap $(-1,1)$.  
It was shown recently in \cite{KL:low} that in the low-regularity finite-energy setting
$$q(x)\in L^2_\loc(\R)\hbox{ with }|q|^2-1, \d_xq\in H^{-1}(\R),$$
  the essential spectrum of the Lax operator $L_q$ is $(-\infty, -1]\cup [1,\infty)$, and   the spectrum outside the essential spectrum
consists of isolated simple eigenvalues  in $(-1, 1)$.
However, under this weak assumption, there  might be eigenvalues embedded in the essential spectrum.
A specific kind of piecewise constant potentials 
\begin{equation*}\label{BPpotential} 
    q(x)=\left\{\begin{array}{cc}
       e^{i\theta_-}  & x<-R,  \\
       Ae^{i\varphi}  & -R<x<R,\\
       e^{i\theta_+} & x>R,
    \end{array}\right.
 \end{equation*}
has been considered in \cite{BP}, and the authors there estimated the location of the discrete eigenvalues inside the spectral gap $(-1,1)$ by considering the relation between $A$ and $\cos(\varphi)$. In particular, if $A<1$, then there is at least one discrete eigenvalue.
Finally we mention   here a recent work \cite{BFP} assuming the nonzero asymmetric boundary condition on the potential
$$
q(x)\rightarrow q_\pm,\hbox{ as }x\rightarrow\pm\infty,
\hbox{ with }|q_+|\neq |q_-|.
$$ 

In spite of its physical and mathematical relevance, the spectral theory for the Lax operator $L=\left(\begin{matrix}
i\d_x&-iq \\ i\bar q&-i\d_x
\end{matrix}\right)$ with general potentials $q$ is still far from satisfactory.
Here we propose a  new idea to study the operator $L$, when the potentials are assumed to be   nowhere vanishing, bounded and with finite phase velocity:
\begin{align}\label{Assumption}
    q=|q|e^{i\varphi}\in L^\infty(\R;\C),\quad |q|>0, \quad \d_x\varphi\in L^\infty(\R;\R).
\end{align} 
By straightforward calculations, the following   unitary matrix which we believe to be new
$$M=\frac1{\sqrt2}\begin{pmatrix} e^{-\frac12i(\varphi-\frac\pi2)} & e^{\frac12i(\varphi-\frac\pi2)}\\
 e^{-\frac12i(\varphi-\frac\pi2)}&-e^{\frac12i(\varphi-\frac\pi2)}\end{pmatrix}: H^s(\R;\C^2)\to H^s(\R;\C^2),\quad s=0,1,$$
 transforms the Lax operator $L: H^1(\R;\C^2)\to L^2(\R;\C^2)$   to the following unitarily equivalent operator 
\begin{align}\label{cL,intro}
    \mathcal{L}=MLM^\ast =\left( \begin{matrix}
 -u_- &  i\d_x \\  i\d_x & -u_+
\end{matrix} \right): H^1(\R;\C^2)\to L^2(\R;\C^2),
\end{align}
where the two real-valued functions $u_\pm$ read 
\begin{equation}\label{upm,intro}
u_\pm:=\frac12\d_x\varphi\pm |q|\in L^\infty(\R;\R).
\end{equation}
This proves
\begin{lem}[Unitary equivalence between   $L$ and   $\cL$]\label{lem:L-cL}
 For nowhere vanishing bounded potentials $q$ with finite phase velocity such that $u_\pm\in L^\infty(\R;\R)$,   the operator $L: H^1(\R;\C^2)\to L^2(\R;\C^2)$ and   the operator $\cL: H^1(\R;\C^2)\to L^2(\R;\C^2)$ 
 are unitarily equivalent:
 $$
 L=M^\ast \cL M. 
 $$
 \end{lem}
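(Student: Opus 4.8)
The plan is to verify the operator identity $L = M^\ast \mathcal{L} M$ by a direct computation, treating the unitarity of $M$ and the conjugation of the differential operator separately. First I would check that $M$ is indeed unitary: since each entry has modulus $\tfrac{1}{\sqrt2}$ and the two columns are $\tfrac{1}{\sqrt2}(e^{-\frac{i}{2}(\varphi-\pi/2)},\, e^{-\frac{i}{2}(\varphi-\pi/2)})^{\!\top}$ and $\tfrac{1}{\sqrt2}(e^{\frac{i}{2}(\varphi-\pi/2)},\, -e^{\frac{i}{2}(\varphi-\pi/2)})^{\!\top}$, a quick inner-product calculation shows $M^\ast M = \mathrm{Id}$ pointwise in $x$. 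Because $|M_{jk}(x)|$ is constant and $\varphi$ enters only through phases, $M$ maps $L^2(\R;\C^2)$ isometrically onto itself; the claim that it also preserves $H^1$ will follow once we know $\d_x\varphi \in L^\infty$, since then $\d_x M$ is a bounded multiplication operator and $M$ together with $M^\ast$ maps $H^1$ to $H^1$.

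The core of the proof is computing $\mathcal{L} = M L M^\ast$ (equivalently $L = M^\ast \mathcal{L} M$). The essential subtlety is that $M$ depends on $x$, so $M$ does not commute with $\d_x$: one has $\d_x(M^\ast \Psi) = M^\ast \d_x \Psi + (\d_x M^\ast)\Psi$, and the extra term $\d_x M^\ast$ is precisely what converts the off-diagonal potential $-iq$ of $L$ into the diagonal Riemannian-invariant entries $-u_\pm$ of $\mathcal{L}$. The plan is to apply $L$ to $M^\ast \Psi$, use the product rule on the $i\d_x$ entries, and collect terms. I expect the $i\d_x$ pieces of $L$ acting through $M^\ast$ to generate both the clean $i\d_x$ off-diagonal entries of $\mathcal{L}$ (from the $M^\ast \d_x$ part) and diagonal multiplication terms $\pm\tfrac12\d_x\varphi$ (from differentiating the phase $e^{\mp\frac{i}{2}(\varphi-\pi/2)}$), while the $\mp iq = \mp i|q|e^{i\varphi}$ entries of $L$, after conjugation by $M$, should produce the $\mp|q|$ contributions. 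Summing the phase-velocity term $\tfrac12\d_x\varphi$ and the amplitude term $\pm|q|$ yields exactly $u_\pm = \tfrac12\d_x\varphi \pm |q|$ on the diagonal, matching \eqref{reL}. The factor $e^{i\pi/4}$ built into the shift by $-\tfrac{\pi}{2}$ in the exponent is what arranges the $i$'s and signs so the result is exactly the self-adjoint matrix with $-u_-$ and $-u_+$ on the diagonal and $i\d_x$ off-diagonal.

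The main obstacle, and the one deserving care, is the bookkeeping of the eight terms that arise when conjugating the $2\times 2$ matrix operator: one must track four phase factors $e^{\pm\frac{i}{2}(\varphi-\pi/2)}$, their $x$-derivatives, and the factor $\bar q = |q|e^{-i\varphi}$ versus $q = |q|e^{i\varphi}$, and confirm that all the unwanted cross terms cancel. Rather than multiply the three operator-valued matrices blindly, I would verify the identity entry by entry by testing against a smooth compactly supported $\C^2$-valued function, which keeps the derivative terms honest and makes the cancellations transparent. Finally, I would note that all computations are justified on the common core $C_c^\infty(\R;\C^2)$ (or directly on $H^1$, using $\d_x\varphi\in L^\infty$ to control the commutator term), and since $L$ and $\mathcal{L}$ are both self-adjoint with domain $H^1(\R;\C^2)$ and $M$ is a bounded unitary preserving that domain, the operator identity $L = M^\ast \mathcal{L} M$ on $C_c^\infty$ extends to the full domain by density, completing the proof.
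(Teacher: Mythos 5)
Your proposal is correct and takes essentially the same route as the paper: the paper's proof consists precisely of the ``straightforward calculations'' verifying $M L M^\ast=\cL$ (with the differentiated phases $e^{\mp\frac{i}{2}(\varphi-\pi/2)}$ supplying the $\tfrac12\d_x\varphi$ diagonal terms and the conjugated $\mp iq$ entries supplying the $\pm|q|$ terms, exactly as you describe), combined with the mapping estimates $\|M\psi\|_{L^2}=\|\psi\|_{L^2}$ and $\|M\psi\|_{H^1}\leq(1+\|\d_x\varphi\|_{L^\infty})\|\psi\|_{H^1}$ (likewise for $M^\ast$), which is your use of $\d_x\varphi\in L^\infty$ to control $\d_x M$. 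Your closing core-and-density argument is a harmless elaboration of what the paper leaves implicit.
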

Hence it suffices to study the spectral problem for $\cL$. 
Simply by integration by parts, one can  show that for any $c\in\R$,  $u_-+c$ (resp. $u_+-c$) controls the size $c-\lambda$ (resp. $c+\lambda$), where $\lambda$ is any eigenvalue of $\cL$, in the following sense: 
\begin{thm}[Location of  eigenvalues of $\cL$]\label{thm:cL}
Let $u_\pm\in L^\infty(\R; \R)$.
If $\lambda\in\R$ is an eigenvalue of the operator 
$\cL: H^1(\R;\C^2)\to L^2(\R;\C^2)$ given in \eqref{cL,intro},
 then $\lambda$ satisfies, for all $c\in\R$, 
 \begin{align*}
     c-\lambda\leq \|(u_-+c)^{+}\|_{L^\infty}\quad
     \hbox{ or }\quad c+\lambda \leq \|(u_+-c)^{-}\|_{L^\infty}.
 \end{align*}
In particular if $u_+, -u_-\geq c>0$,  there are no eigenvalues of  $\cL$ in $(-c,c)$.
 \end{thm}
In the above,  $f^{+}, f^{-}$ denote the positive and negative parts of a real-valued function $f$ respectively.
 Due to the unitary equivalence between  the Lax operator $L$ and the operator $\cL$, we have immediately 
 \begin{col}[Location of  eigenvalues of $L$]\label{col:L} 
 Let $q=q(x)$ satisfy \eqref{Assumption}.
The eigenvalues $\lambda\in\R$ of the operator $L:H^1(\R;\C^2)\to L^2(\R;\C^2)$ given in \eqref{L}   satisfy, for all $c\in\R$, 
 \begin{align*}
      c-\lambda\leq \Bigl\|\Bigl(\frac12\d_x\varphi-|q|+c\Bigr)^{+}\Bigr\|_{L^\infty}
     \quad
     \hbox{ or }\quad c+\lambda \leq \Bigl\|\Bigl(\frac12\d_x\varphi+|q|-c\Bigr)^{-}\Bigr\|_{L^\infty}.
 \end{align*}
 In particular, if $|q|\geq c+\frac12|\d_x\varphi|$ pointwise for some $c>0$, then there are no eigenvalues of  $L$ in  $(-c,c)$.
 \end{col} 
 Theorem \ref{thm:cL} will be  proved in Section \ref{sec:general}.

\smallbreak 

If one assumes  the following boundary condition for the nowhere vanishing bounded potentials $q$ given \eqref{Assumption}:  
\begin{align}\label{BC:q,varraphi}
    |q(x)|\rightarrow 1\hbox{ and }\partial_x\varphi(x)\rightarrow 0\hbox{ as }|x|\rightarrow\infty,
\end{align} 
which is stronger than   \eqref{BC},
then $u_\pm$ given in \eqref{upm,intro} satisfy 
$$u_\pm(x)\rightarrow \pm 1
\hbox{ as }|x|\rightarrow \infty.$$
Under  the smallness condition
\begin{equation}\label{smallness:upm}
     \|(u_+-1)^-\|_{L^\infty}+\|(u_-+1)^+\|_{L^\infty}<2,
\end{equation}
by Theorem \ref{thm:cL}, the eigenvalues $\lambda\in (-1,1)$ of $L,\cL$ are located 
\begin{align*}
    \hbox{ either in }\,I_-:=(-1,-1+\|(u_+-1)^-\|_{L^\infty}]\quad \hbox{ or in  }\, I_+:=[1-\|(u_-+1)^-\|_{L^\infty},1),
\end{align*}
where $I_+$ and $I_-$ are disjoint.
We have the following characterization of the numbers of eigenvalues inside $I_-$ or $I_+$ respectively:
\begin{thm}[Number of  eigenvalues]\label{thm:lambda}
Let $q=q(x)$ satisfy \eqref{Assumption}-\eqref{BC:q,varraphi}-\eqref{smallness:upm}.
The following holds true:
\begin{enumerate}
    \item Let $m\in \N_0\cup\{\infty\}$ denote the number of negative eigenvalues $\mu$ of the Sturm-Liouville eigenvalue problem  
\begin{equation}\label{ev,mu,intro}
    -\d_x\Bigl( \frac{1}{1-u_-}\d_x\psi\Bigr)-(1-u_+)\psi=\mu \psi,\quad x\in\R.
\end{equation} 
    Then there are precisely $m$ eigenvalues $\lambda$ inside $ I_-$ of the operators $L, \cL$, counting by multiplicity.
    \item Let $l\in \N_0\cup\{\infty\}$ denote the number of negative eigenvalues $\nu$ of the Sturm-Liouville eigenvalue problem 
\begin{equation}\label{ev,nu,intro}
     -\d_x\Bigl( \frac{1}{1+u_+}\d_x\psi\Bigr)-(1+u_-)\psi=\nu \psi,\quad x\in\R.
\end{equation}
    Then there are precisely $l$ eigenvalues $\lambda$ inside $I_+$ of the operators $L, \cL$, counting by multiplicity.
\end{enumerate} 
\end{thm}  
  We will prove Theorem \ref{thm:lambda} in Section \ref{sec:GP} by use of the min-max principle. 
In Section \ref{sec:GP} we will also show the (non-)existence of eigenvalues for the Lax operator with some special potentials, as applications of Theorem \ref{thm:cL} and Theorem \ref{thm:lambda}.   
  
  At the end of the introduction part we give some remarks below.
  \begin{rmk}[Eigenvalue problems \eqref{ev,mu,intro} and \eqref{ev,nu,intro}]\label{rmk:RR}
  The eigenvalue problems \eqref{ev,mu,intro} and \eqref{ev,nu,intro} are formulated strongly here. It is however sufficient to consider their weak formulations  as follows
\begin{align*}
&\int_{\R}\Bigl( \frac{1}{1-u_-}\d_x\psi\,\d_x\bar\varphi-(1-u_+)\psi\bar\varphi\Bigr) \dx=\mu \int_{\R} \psi\bar\varphi\dx,\quad \forall\varphi\in H^1(\R),
\\
&\int_{\R}\Bigl( \frac{1}{1+u_+}\d_x\psi\,\d_x\bar\varphi-(1+u_-)\psi\bar\varphi\Bigr) \dx=\nu \int_{\R} \psi\bar\varphi\dx,\quad \forall\varphi\in H^1(\R).
\end{align*}

  If $u_+\geq 1$ (resp. $u_-\leq -1$), then \eqref{ev,mu,intro} (resp. \eqref{ev,nu,intro}) has no negative eigenvalues, and hence Theorem \ref{thm:lambda} implies that all the eigenvalues $\lambda\in (-1,1)$ of the operators $L,\cL$ lie  in $I_+$ (resp. $I_-$).
    This partially recovers  results in Theorem \ref{thm:cL} with $c=1$.
    
  One can use the Rayleigh-Ritz method to show  the existence of negative eigenvalues for   \eqref{ev,mu,intro} or  \eqref{ev,nu,intro}.
 See \cite[Theorem 10.23]{NPW} for more explanations for the  method,
and see Example \ref{ex:ev} in Section \ref{sec:GP} below for the existence result of negative eigenvalues of \eqref{ev,mu,intro} or \eqref{ev,nu,intro} as an application of    the Rayleigh-Ritz method.
\end{rmk}

\begin{rmk} [Functions $u_\pm$]

The   compressible Euler equations 
\begin{equation}\label{Euler}\begin{split}
    \left\{ \begin{array}{l}\d_t \rho+2\d_x(\rho v)=0,
    \\
    \d_t (\rho v)+ 2\d_x(\rho v^2)+\d_x p =0,
    \end{array}\right.
\end{split}\end{equation} 
where $(\rho,v): \R\times\R\to [0,\infty)\times\R$ denote the unknown density and velocity functions respectively, 
are used to describe the motion of  compressible fluids.
For example,   \eqref{Euler} together with the pressure law (up to a constant) 
$p=P_h(\rho):=\rho^2 $
governs the flow of (a class of) polytropic gases,
and in the study of its dynamics, the two Riemannian invariants $u_\pm:=\frac12 v\pm \sqrt{\rho}$   played an important role, cf. the book \cite{Smoller}.

If $q=|q|e^{i\varphi}\neq 0$ everywhere, then  
one can write   the NLS equation \eqref{GP} in its hydrodynamic form \eqref{Euler} for the corresponding density and velocity functions given by  $(\rho,v):=(|q|^2, \d_x\varphi)$, and  the    pressure law
reads  $ p=P_h(\rho)+P_q(\rho)$,
where $P_q(\rho):=-\frac12\rho\,\d_x(\frac{\d_x\rho}{\rho})$ denotes the so-called quantum pressure.
 The corresponding functions $u_\pm=\frac12\d_x\varphi\pm|q|$ given in \eqref{upm,intro}  have also been used to study NLS-related problems in the literature,   e.g. in the study of hydrodynamic optical soliton tunneling in \cite{SHE}, and in the study of the semiclassical limit   in \cite{JLM, Jin}.
 
 Observe that if $u_\pm$ satisfy
\begin{align*}
    -1\leq u_-<u_+\leq 1,\quad u_\pm(x)\rightarrow \pm1\hbox{ as }|x|\rightarrow\infty,\quad  \|1-u_+\|_{L^\infty}+\|1+u_-\|_{L^\infty}<2,
\end{align*}  
then the correspnding potential $q=|q|e^{i\varphi}$ satisfies the assumptions  \eqref{Assumption}-\eqref{BC:q,varraphi}-\eqref{smallness:upm}.
 In \cite{Jin}, under some further strong assumptions such as boundedness, single critical points and decay assumptions on $(u_++u_-)$ and $(u_+-u_--2)$,
 Jin   used the WKB method to calculate the asymptotic number of eigenvalues of $L^{\hbar}:=\left(\begin{matrix}
i\hbar\d_x&-iq_{\hbar} \\ i\bar q_{\hbar}&-i\hbar\d_x
\end{matrix}\right)$, $q_{\hbar}(x)=|A(x)|e^{i\frac{\varphi(x)}{\hbar}}$ in the semiclassical limit $\hbar\rightarrow0$: 
$$
\frac{1}{\pi\hbar} \int_{\R} \Bigl( \sqrt{(1-u_+(x))(1-u_-(x))} +\sqrt{(1+u_+(x))(1+u_-(x))}\Bigr)\dx.
$$
We believe that the unitarily equivalent formulation $\cL$ of $L$ will give new observations to    interesting  NLS-related  problems.
 
  \end{rmk} 
  
  \begin{rmk}[Example of potentials with piecewise-constant amplitude and phase velocity]
We will show that (see Example \ref{ex:2} in Section \ref{sec:GP}  below), for potentials  with piecewise-constant amplitude and phase velocity
\begin{equation}\label{potential,linear,intro} 
    q(x)=\left\{\begin{array}{cc}
       e^{i\theta}  & x< a,  \\
       Ae^{i(\theta+v(x-a))}  & a\leq x\leq b,\\
       e^{i(\theta+v(b-a))} & x> b,
    \end{array}\right.
 \end{equation}
 where  $(a,b)\subset\R$,  $\theta\in [0,2\pi)$,  $A>0$ and $v\in \R$,  the Lax operator $L$ has
 \begin{itemize}
 \item No eigenvalues if $A\geq 1$ and $\frac12|v|\leq A-1$. 
In particular, the case with large amplitude $A\geq 1$ and  constant phase $v=0$ is included: $q=e^{i\theta}\left\{\begin{array}{l}A \hbox{ on }[a,b],\\ 1\hbox{ otherwise.} \end{array}\right.$
     \item Exactly two simple eigenvalues, located in $I_-=(-1,-\frac12v-A]$ and $I_+=[-\frac12v+A,1)$ respectively, if $A\in (0,1)$ and $\frac12|v|<1-A$.
      In particular, the case with small amplitude $A\in (0,1)$ and  constant phase  $v=0$ is included. 
     \item Exactly one simple eigenvalue, located in $I_-=(-1,-\frac12v-A]$, if $A\in (0,1)$ and  $\frac12v\in (-1-A,-1+A]$;
     \item Exactly one simple eigenvalue, located in $I_+=[-\frac12v+A,1)$, if  $A\in (0,1)$ and $\frac12v\in [1-A,1+A)$.
 \end{itemize} 
 
 It is straightforward to check that potentials given in \eqref{potential,linear,intro}: 
 $$q(x)=\bigl( 1+(A-1)1_{[a,b]}(x)\bigr) e^{i\bigl(\theta+v\,\chi(x)\bigr)},$$
 where $1_{[a,b]}$ denotes the characteristic function on $[a,b]$ and $\chi$ denotes the Lipschitz continuous function $  (x-a)1_{[a,b]}+(b-a)1_{(b, \infty)}$, satisfy
 \begin{align*}
      &q\in L^2_{\hbox{\tiny loc}}(\R), \quad |q|^2-1=(A^2-1)1_{[a,b]}\in L^2(\R), \\
      &\d_x q= e^{i (\theta+v\,\chi  )}\bigl( (A-1)(\delta_a-\delta_b)+iAv1_{[a,b]}\bigr)\in H^{-1}(\R),
 \end{align*}
 where $\delta_c$, $c\in \R$ denotes the Dirac function such that $\langle \delta_c, \varphi\rangle_{\mathcal{D}'(\R), \mathcal{D}(\R)}=\varphi(c)$ holds for all test functions $\varphi\in \mathcal{D}(\R)$. 
 Hence by \cite[Theorem 1.5]{KL:low} there exists a unique global-in-time solution of the NLS equation \eqref{GP} with the initial data given in \eqref{potential,linear,intro} (in the sense given there), and by \cite[Theorem 1.6]{KL:low} the corresponding (simple) eigenvalues inside $(-1,1)$ of the Lax operator given above preserve for all the time.
  \end{rmk}

\noindent{\it Organization of the paper.}
We will prove  Theorem \ref{thm:cL} in Section \ref{sec:general}, for  general  nowhere vanishing bounded potentials with finite phase velocity.

In Subsection \ref{subs:number}  we will prove Theorem \ref{thm:lambda}   for    nowhere vanishing bounded potentials with unit-size amplitude and vanishing phase velocity at infinity.
In Subsection \ref{subs:example} we will  give some interesting examples as illustration of Theorem \ref{thm:cL} and Theorem \ref{thm:lambda}.  

In the appendix we will consider  a specific kind of  potentials where $u_+=1$ or $u_-=-1$, and      the eigenvalues will be characterized   via a family of compact operators.
It is also of independent interest.

\section{General case of nowhere vanishing bounded potentials with finite phase velocity}\label{sec:general}
\setcounter{equation}{0}

In this section we will study the eigenvalue problem for the Lax operator  
$$L=\left(\begin{matrix}
i\d_x&-iq \\ i\bar q&-i\d_x
\end{matrix}\right),$$
in the (general) case of nowhere vanishing bounded potentials   with finite phase velocity given in \eqref{Assumption}:
\begin{align*} 
    q=|q|e^{i\varphi}\in L^\infty(\R;\C),\quad |q|>0, \quad \d_x\varphi\in L^\infty(\R;\R).
\end{align*} 
By the unitary equivalence between $L$ and $\cL$ given in Lemma \ref{lem:L-cL}, it suffices to study the eigenvalue problem for the operator $\cL$, which will be reformulated   into $\lambda$-nonlinear eigenvalue problems in Subsection \ref{sec:cL}.
We will analyze these $\lambda$-nonlinear eigenvalue problems to derive  estimates for the eigenvalues of $L, \cL$ given in Theorem \ref{thm:cL} in Subsection \ref{sec:ev}.

 \subsection{Eigenvalue problem of $\mathcal{L}$}\label{sec:cL}
 We consider the  eigenvalue problem for $\cL$:
 \begin{equation}\label{cL,lambda}
     \mathcal{L}\Psi=\left( \begin{matrix}
-u_- &  i\d_x \\  i\d_x & -u_+
 \end{matrix} \right)\Psi=\lambda\Psi, \hbox{ with }\Psi=\begin{pmatrix}\Psi_1\\ \Psi_2\end{pmatrix}.
 \end{equation}
 We are going to reformulate it in different interesting cases.
We notice the following symmetry
\begin{equation}\label{symmetry}
(u_+, u_-, \lambda, \Psi_1, \Psi_2)\mapsto (-u_-, -u_+,-\lambda, -\Psi_2, \Psi_1)\end{equation}
in this spectral problem $\cL\Psi=\lambda\Psi$, which  corresponds to the symmetry 
$$(q, \lambda, \psi_1, \psi_2)\mapsto  (\bar q, -\lambda, \psi_2, \psi_1)$$
in the spectral problem for the Lax operator $L\psi=\lambda\psi$, with $\psi=\begin{pmatrix}
\psi_1\\ \psi_2
\end{pmatrix}$. 
 
 \subsubsection{Case $u_\pm=\pm 1$} 
 If $\lambda\neq 1$, then the above spectral problem \eqref{cL,lambda} reads simply   
\begin{equation*}\begin{split}
  \left\{\begin{array}{l} (\lambda-1)\Psi_1=i\d_x\Psi_2,
 \\
 -\d_{xx}\Psi_2-(\lambda^2-1)\Psi_2=0. \end{array}\right.
 \end{split}\end{equation*}
That is, the second component $\Psi_2$ solves the spectral problem for the free Schr\"odinger operator $-\d_{xx}$ with the spectral parameter $(\lambda^2-1)$, and the first component $\Psi_1$ is given by $\frac{1}{\lambda-1}i\d_x\Psi_2$.

By a similar argument or by the symmetry property \eqref{symmetry}, if $\lambda\neq-1$, then the  spectral problem  \eqref{cL,lambda} reads   \begin{equation*} 
      \left\{\begin{array}{l} 
  - \d_{xx}\Psi_1 -(\lambda^2-1)\Psi_1=0,
 \\
   (\lambda+1)\Psi_2=i\d_x\Psi_1, \end{array}\right.
 \end{equation*}
i.e. the first component $\Psi_1$ solves the spectral problem for the free Schr\"odinger operator $-\d_{xx}$ with the spectral parameter $(\lambda^2-1)$, and the second component $\Psi_2$ is given by $\frac{1}{\lambda+1}i\d_x\Psi_1$.
 
  \subsubsection{Case $u_-=-1$}
If $\lambda\neq 1$, then the spectral problem \eqref{cL,lambda} reads  
\begin{equation}\label{Dirac:u+}\begin{split}
 \left\{\begin{array}{l} (\lambda-1)\Psi_1=i\d_x\Psi_2,
 \\
 -\d_{xx}\Psi_2-(\lambda-1)(\lambda+u_+)\Psi_2=0, \end{array}\right.
 \end{split}\end{equation}
and it suffices to consider the  $\lambda$-nonlinear eigenvalue problem for $\Psi_2$:
 \begin{equation*}\label{Schrodinger:V+}
   -\d_{xx}\phi-(\lambda-1)(\lambda+u_+)\phi=0,
 \end{equation*} 
with the first component $\Psi_1$   given by $\frac{1}{\lambda-1}i\d_x\phi$.

  Obviously $\lambda=1$ is not an eigenvalue of $\cL$ in this case $u_-=-1$.
 
 \subsubsection{Case $u_+=1$}
Similarly as above or by the symmetry property \eqref{symmetry}, if $\lambda\neq -1$, then the spectral problem \eqref{cL,lambda} reads  \begin{equation}\label{Dirac:u-}\begin{split}
 \left\{\begin{array}{l}  -\d_{xx}\Psi_1-(\lambda+1)(\lambda+u_-)\Psi_1=0,
 \\
 (\lambda+1)\Psi_2=i\d_x\Psi_1, \end{array}\right.
 \end{split}\end{equation} 
 and $\lambda=-1$ is not an eigenvalue of $\cL$ in this case $u_+=1$.

 \subsubsection{General case of $u_\pm\in L^\infty(\R;\R)$ and $\lambda$  such that $\lambda+u_-\neq 0$ on $\R$}By straightforward calculations, the spectral problem    \eqref{cL,lambda} reads as
 \begin{equation}\label{u+,lambda}
      \left\{\begin{array}{l} 
      (\lambda+u_-)\Psi_1=i\d_x\Psi_2,
 \\
- \d_x\Bigl(  \frac{1}{\lambda+u_-}\d_x\Psi_2\Bigr)-(\lambda+u_+)\Psi_2=0. \end{array}\right.
 \end{equation}

 \subsubsection{General case of $u_\pm\in L^\infty(\R;\R)$ and $\lambda$  such that $\lambda+u_+\neq 0$ on $\R$}As above, \eqref{cL,lambda} becomes  
 \begin{equation}\label{u-,lambda}
      \left\{\begin{array}{l} 
  - \d_x\Bigl(  \frac{1}{\lambda+u_+}\d_x\Psi_1\Bigr)-(\lambda+u_-)\Psi_1=0,
 \\
   (\lambda+u_+)\Psi_2=i\d_x\Psi_1. \end{array}\right.
 \end{equation}

To conclude, we have  
\begin{lem}[Reformulation of the eigenvalue problem of $\cL$]\label{lem:cL}
 The eigenvalue problem $\cL\Psi=\lambda\Psi$ reads, 
 \begin{enumerate}
     \item if $\lambda+u_-\neq 0$ on $\R$, as
     \begin{equation}\label{lambda:phi,+}
         - \d_x\Bigl(  \frac{1}{\lambda+u_-}\d_x\phi\Bigr)-(\lambda+u_+)\phi=0,
     \end{equation}
    together with $\Psi=\begin{pmatrix}\Psi_1\\ \Psi_2\end{pmatrix}=\begin{pmatrix} \frac{1}{\lambda+u_-}i\d_x\phi\\ \phi
 \end{pmatrix}$.
 
 \item if $\lambda+u_+\neq 0$ on $\R$, as
     \begin{equation}\label{lambda:phi,-}
         - \d_x\Bigl(  \frac{1}{\lambda+u_+}\d_x\phi\Bigr)-(\lambda+u_-)\phi=0,
     \end{equation}
    together with $\Psi=\begin{pmatrix}\Psi_1\\ \Psi_2\end{pmatrix}=\begin{pmatrix} \phi \\ \frac{1}{\lambda+u_+}i\d_x\phi
 \end{pmatrix}$.
 \end{enumerate}
 
\end{lem}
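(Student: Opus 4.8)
The plan is a direct reduction of the first-order $2\times2$ system \eqref{cL,lambda} to a single scalar second-order equation, followed by a check that the reduction is reversible. Writing $\cL\Psi=\lambda\Psi$ row by row and moving the diagonal terms to the right-hand side, I would first record the coupled first-order system
\begin{equation*}
(\lambda+u_-)\Psi_1=i\d_x\Psi_2,\qquad (\lambda+u_+)\Psi_2=i\d_x\Psi_1.
\end{equation*}
Since $\Psi\in H^1(\R;\C^2)$ by the mapping property of $\cL$, both identities hold in $L^2(\R)$.

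For part (1), under the hypothesis $\lambda+u_-\neq0$ on $\R$ (understood so that $(\lambda+u_-)^{-1}\in L^\infty(\R)$), I solve the first equation for $\Psi_1=\frac{1}{\lambda+u_-}i\d_x\Psi_2$; as $\d_x\Psi_2\in L^2$ and the weight is bounded, this is consistent with $\Psi_1\in L^2$. Substituting into the second equation, setting $\phi=\Psi_2$, and using $i\cdot i=-1$, yields
\begin{equation*}
-\d_x\Bigl(\tfrac{1}{\lambda+u_-}\d_x\phi\Bigr)-(\lambda+u_+)\phi=0,
\end{equation*}
to be read in the distributional sense in $H^{-1}(\R)$. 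Part (2) follows by the identical computation, now solving for $\Psi_2$ from the second equation, or more economically by invoking the symmetry \eqref{symmetry}, which exchanges the roles of $u_+$ and $u_-$ and of $\Psi_1$ and $\Psi_2$ while sending $\lambda\mapsto-\lambda$.

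To complete the equivalence I would verify the converse: given a weak solution $\phi$ of the scalar equation, reconstruct $\Psi$ by the stated formulas and check that $\Psi\in H^1(\R;\C^2)$ solves \eqref{cL,lambda}. The only point that genuinely needs care is this regularity bookkeeping rather than any hard estimate. One must confirm that $\Psi_1=\frac{1}{\lambda+u_-}i\d_x\phi$ is smooth enough to lie in $H^1$; the scalar equation supplies exactly this, since it asserts $\d_x\Psi_1=i\d_x\bigl(\tfrac{1}{\lambda+u_-}\d_x\phi\bigr)=-i(\lambda+u_+)\phi\in L^2$, whence $\Psi_1\in H^1$, while $\Psi_2=\phi\in H^1$ by construction. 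The two first-order equations are then recovered, so the correspondence $\phi\leftrightarrow\Psi$ is a genuine bijection between the solution spaces and the reformulation holds.
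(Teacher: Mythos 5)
Your proposal is correct and follows essentially the same route as the paper, which obtains the lemma by exactly this componentwise reduction of $\cL\Psi=\lambda\Psi$ to the coupled first-order system and then to the scalar equations \eqref{u+,lambda} and \eqref{u-,lambda}, dismissed there as ``straightforward calculations.'' Your additional care with the converse direction, the $H^1$ bookkeeping for $\Psi_1=\frac{1}{\lambda+u_-}i\d_x\phi$, and the reading of ``$\lambda+u_-\neq 0$ on $\R$'' as $(\lambda+u_-)^{-1}\in L^\infty(\R)$ merely makes explicit what the paper leaves implicit.
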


 \subsection{Proof of Theorem \ref{thm:cL}}\label{sec:ev}
 In this subsection we prove Theorem \ref{thm:cL}.
 We consider first $c=1$, and for notational simplicity we introduce   two real-valued functions
 \begin{equation}\label{Vpm}
     V_\pm=u_\pm\mp 1=\frac12v\pm(|q|-1).
 \end{equation}
From now on we assume that $u_\pm\in L^\infty(\R; \R)$, and hence $V_\pm \in L^\infty(\R;\R)$,
and we consider the eigenvalue problem   of the operator 
$$\cL =  \begin{pmatrix}
1-V_- &  i\d_x \\  i\d_x & -1-V_+
 \end{pmatrix}  : H^1(\R;\C^2)\to L^2(\R;\C^2).$$

We decompose $V_\pm$ into their positive and negative parts respectively $$V_\pm=(V_\pm)^{+}-(V_\pm)^{-},\hbox{ with }(V_\pm)^{+}=\max\{V_\pm, 0\},\,\, (V_\pm)^{-}=\max\{-V_\pm, 0\}.$$
If $\lambda$ is an eigenvalue of $\cL$ such that 
\begin{align*} 
    1-\lambda>\|(V_-)^+\|_{L^\infty},
\end{align*}
then $$-(\lambda+u_-)=1-\lambda-V_-=1-\lambda-(V_-)^++(V_-)^->0 \hbox{ on }\R.$$
By Lemma \ref{lem:cL},  the eigenvalue problem $\cL\Psi=\lambda\Psi$ reads as the $\lambda$-nonlinear eigenvalue problem \eqref{lambda:phi,+}:
\begin{equation*} 
         -  \d_x\Bigl(  \frac{1}{1-\lambda-V_-}\d_x\phi\Bigr)+(1+\lambda+V_+)\phi=0.
     \end{equation*} 
  We test it
  by $\bar\phi$ to derive
 \begin{align*}
     \int_{\R}\Bigl(\frac{1}{1-\lambda-V_-}|\d_x\phi|^2+(1+\lambda+V_+)|\phi|^2\Bigr)\dx=0.
 \end{align*} 
This yields
 $$
  0\geq \int_{\R} (1+\lambda+V_+)|\phi|^2 \dx\geq (1+\lambda -\|(V_+)^-\|_{L^\infty})\|\phi\|_{L^2}^2.
 $$
 We then have shown  for $\lambda$ an eigenvalue of $\cL$,
 \begin{equation*}\label{Case+}
  1-\lambda>\|(V_-)^+\|_{L^\infty}\Longrightarrow 
1+\lambda\leq \|(V_+)^-\|_{L^\infty}.
\end{equation*}
Hence equivalently (or following the same argument)
 \begin{equation*}\label{Case-}
  1+\lambda>\|(V_+)^-\|_{L^\infty}\Longrightarrow 
1-\lambda\leq \|(V_-)^+\|_{L^\infty}.
\end{equation*}
  We conclude  
 the following unconditional statement for the eigenvalues $\lambda$ of $\cL$:
 $$
  1-\lambda\leq \|(u_-+1)^{+}\|_{L^\infty}
    \quad  \hbox{ or }\quad 1+\lambda \leq \|(u_+-1)^{-}\|_{L^\infty}.
 $$

 More generally, $\forall c\in\R$,  we can replace $\pm1$ and $u_\pm\mp1$ by $\pm c$ and $u_\pm\mp c$ respectively in the above arguments.
 This completes the proof of   Theorem \ref{thm:cL}.

 
 \section{Special case of potentials under nonzero boundary conditions}\label{sec:GP}
 \setcounter{equation}{0}
 In this section we will consider the Lax operator $L=\left(\begin{matrix}
i\d_x&-iq \\ i\bar q&-i\d_x
  \end{matrix}\right): H^1(\R;\C^2)\to L^2(\R; \C^2)$ with    bounded potentials $q=q(x): \R\to\C\backslash\{0\}$   satisfying 
 \begin{equation}\label{Assumption:GP}\begin{split}
    &q=|q|e^{i\varphi}\in L^\infty(\R;\C),\quad |q|>0,\quad \d_x\varphi\in L^\infty(\R;\R),
    \\
&   |q(x)|\rightarrow1 \hbox{ and }\d_x\varphi(x)\rightarrow0\hbox{ as }|x|\rightarrow\infty.
\end{split}\end{equation} 
Thus the potentials $u_\pm=\frac12\d_x\varphi\pm|q|$ of its unitarily equivalent operator $\mathcal{L}=\left( \begin{matrix}
-u_- &  i\d_x \\  i\d_x & -u_+
 \end{matrix} \right):H^1(\R;\C^2)\to L^2(\R; \C^2) $  satisfy
\begin{equation*}
    u_\pm\in L^\infty(\R; \R),\quad  u_\pm(x)\rightarrow \pm 1 \hbox{ as }|x|\rightarrow\infty.
\end{equation*}
As in Subsection \ref{sec:ev} we introduce
 \begin{equation}\label{Vpm}
    V_\pm=u_\pm\mp 1\in L^\infty(\R; \R),\hbox{ which satisfy }  V_\pm(x)\rightarrow 0 \hbox{ as }|x|\rightarrow\infty,
 \end{equation}
and we decompose $V_\pm$ into their positive and negative parts respectively $$V_\pm=(V_\pm)^{+}-(V_\pm)^{-},\hbox{ with }(V_\pm)^{+}=\max\{V_\pm, 0\},\,\, (V_\pm)^{-}=\max\{-V_\pm, 0\}.$$ 
Under the smallness assumption \eqref{smallness:upm}: 
\begin{equation}\label{smallness}
    \|(V_+)^-\|_{L^\infty}+\|(V_-)^+\|_{L^\infty}<2,
\end{equation}
by Theorem \ref{thm:cL}, all the eigenvalues $\lambda\in (-1,1)$ of $L, \cL$ satisfy
$$
\hbox{ either }\quad \lambda\in I_-=(-1, -1+\|(V_+)^-\|_{L^\infty}]
\quad \hbox{ or }\quad \lambda \in I_+=[1-\|(V_-)^+\|_{L^\infty}, 1),
$$
where $I_+$ and $I_-$ are disjoint. 
For notational convenience we introduce further  two intervals 
\begin{equation*}\label{nonzero}\begin{split}
    &    J_-:=[-1, 1-\|(V_-)^+\|_{L^\infty}),
\quad    J_+:=(-1+\|(V_+)^-\|_{L^\infty}, 1],
\end{split}\end{equation*}
so that $ I_-\subsetneq  J_-$ and $ I_+\subsetneq J_+$.
The eigenvalues $\lambda\in (-1,1)$ have been discussed in Subsection \ref{Case+}, where we have shown  
\begin{equation}\label{ev,I-}\begin{split}
    &\lambda\in J_-\Rightarrow \lambda+u_-=\lambda-1+V_-<0\hbox{ on }\R,
    \\
    &\lambda \in \mathring J_-=J_-\backslash\{-1\}\hbox{ is an eigenvalue of }\cL\Rightarrow \lambda\in I_-,
\end{split}\end{equation}
and similarly (or equivalently)
\begin{equation}\label{ev,I+}\begin{split}
    &\lambda\in J_+\Rightarrow \lambda+u_+=\lambda+1+V_+>0\hbox{ on }\R,
    \\
    &\lambda \in \mathring J_+ 
    =(-1,1)\backslash I_-\hbox{ is an eigenvalue of }\cL\Rightarrow \lambda\in I_+=(-1,1)\backslash \mathring J_-.
\end{split}\end{equation}

This section is organized as follows.
In Subsection \ref{subs:number} we are going to prove Theorem \ref{thm:lambda}, which characterizes the number of the eigenvalues of $L,\cL$ inside $(-1,1)$.
Some examples showing the (non-)existence of   eigenvalues  will be given in Subsection \ref{subs:example}.  

In the appendix some interesting characterization of the eigenvalues inside $(-1,1)$ in the special case $V_-=0$ or $V_+=0$ will be discussed.

 \subsection{Proof of Theorem \ref{thm:lambda}}\label{subs:number}
In this subsection we are going to consider the eigenvalues of $\cL$ inside $(-1,1)$, under the assumptions \eqref{Vpm}-\eqref{smallness}. 
By Lemma \ref{lem:cL} and \eqref{ev,I-}-\eqref{ev,I+}, it suffices to consider  the eigenvalues
$ \lambda\in \mathring J_- $ of the eigenvalue problem \eqref{lambda:phi,+}:
\begin{equation}\label{ev,lambda,phi,+}
         - \d_x\Bigl(  \frac{1}{1-V_--\lambda}\d_x\phi\Bigr)+(1+V_++\lambda)\phi=0,
         \quad x\in\R,
     \end{equation}
     and the eigenvalues $\lambda\in \mathring J_+$ of the eigenvalue problem \eqref{lambda:phi,-}:  \begin{equation}\label{ev,lambda,phi,-}
         - \d_x\Bigl(  \frac{1}{1+V_++\lambda}\d_x\phi\Bigr)+(1-V_--\lambda)\phi=0, \quad x\in\R.
     \end{equation} 
Our goal is to prove Theorem \ref{thm:lambda}, that is,   the number of eigenvalues inside $\mathring{J}_-$ of \eqref{ev,lambda,phi,+} (resp. $\mathring{J}_+$ of \eqref{ev,lambda,phi,-}) is the same as  
the number of negative eigenvalues of the Sturm-Liouville eigenvalue problem \eqref{ev,mu,intro} (resp. \eqref{ev,nu,intro}) reading as follows:
\begin{equation}\label{ev,mu}
    -\d_x\Bigl( \frac{1}{2-V_-}\d_x\psi\Bigr)+V_+\psi=\mu \psi,\quad x\in\R,
\end{equation}
resp. 
\begin{equation}\label{ev,nu}
     -\d_x\Bigl( \frac{1}{2+V_+}\d_x\psi\Bigr)-V_-\psi=\nu \psi,\quad x\in\R.
\end{equation}
By compact perturbation arguments, we deduce from the decay property of $V_\pm$ in \eqref{Vpm} that the essential spectra of both \eqref{ev,mu} and \eqref{ev,nu} are contained in $[0,\infty)$.

We  first do some preparations for the proof of Theorem \ref{thm:lambda}. 
We consider, for $ \lambda\in J_-=[-1, 1-\|(V_-)^+\|_{L^\infty})$, the   eigenvalue problem (noticing $1-V_--\lambda>0$ on $\R$ in this case)
\begin{equation}\label{ev,lambda,mu}
         - \d_x\Bigl(  \frac{1}{1-V_--\lambda}\,\d_x\psi\Bigr)+(1+V_++\lambda)\psi=\mu \psi,
         \quad x\in\R,
     \end{equation}
     and for $\lambda\in  J_+=(-1+\|(V_+)^-\|_{L^\infty},1]$,   the eigenvalue problem  (noticing $1+V_++\lambda>0$ on $\R$ in this case)
     \begin{equation}\label{ev,lambda,nu}
         - \d_x\Bigl(  \frac{1}{1+V_++\lambda}\,\d_x\psi\Bigr)+(1-V_--\lambda)\psi=\nu\psi,
         \quad x\in\R.
     \end{equation} 
     Here, $\lambda$ is regarded as a given parameter, and $\mu$ (resp. $\nu$) is the eigenvalue parameter.
     In particular, 
     \begin{align*}
        & \lambda=-1\Rightarrow \hbox{ the two eigenvalue problems \eqref{ev,mu}, \eqref{ev,lambda,mu} are the same},\\
        & \lambda=1\Rightarrow \hbox{ the two eigenvalue problems \eqref{ev,nu}, \eqref{ev,lambda,nu} are the same}.
     \end{align*}

     We define the Rayleigh quotients for $\psi\in H^1(\R)\backslash\{0\}$
     \begin{equation}\label{R,lambda}
         R(\lambda, \psi):=\frac{\displaystyle\int_{\R} \Bigl[ \frac{1}{1-V_--\lambda}|\d_x\psi|^2+(1+V_++\lambda)|\psi|^2 \Bigr]\dx}{\int_{\R} |\psi|^2\dx}
     \end{equation}
     for the eigenvalue problem \eqref{ev,lambda,mu}, and 
     \begin{equation}\label{S,lambda}
         S(\lambda, \psi):=\frac{\displaystyle\int_{\R} \Bigl[ \frac{1}{1+V_++\lambda}|\d_x\psi|^2+(1-V_--\lambda)|\psi|^2 \Bigr]\dx}{\int_{\R} |\psi|^2\dx}
     \end{equation}
     for the eigenvalue problem \eqref{ev,lambda,nu},
   and respectively the Rayleigh extremal values
     \begin{equation}\label{muj,lambda}
         \mu_j(\lambda):=  \mathop{\inf}\limits_{\substack{ U\subset H^1(\R)\hbox{ subspace}, \\  \dim(U)=j}}\,\max_{\psi\in U\backslash\{0\}} R(\lambda,\psi),\quad j\in\N,
     \end{equation}
     \begin{equation}\label{nuj,lambda}
         \nu_j(\lambda):=  \inf_{\substack{ U\subset H^1(\R)\hbox{ subspace}, \\  \dim(U)=j}}\,\max_{\psi\in U\backslash\{0\}} S(\lambda,\psi),\quad j\in\N.
     \end{equation}

We first observe that for all $\lambda_0\in (-1+\|(V_+)^-\|_{L^\infty}, 1-\|(V_-)^+\|_{L^\infty})=J_-\cap J_+$, we have 
\begin{align}\label{muj,lambda0}
   &  \mu_j(\lambda_0)\geq 1-\|(V_+)^-\|_{L^\infty}+\lambda_0>0, \quad\forall j\in\N, \quad  \hbox{(by  \eqref{R,lambda} and \eqref{muj,lambda})},
\end{align} 
\begin{align}\label{nuj,lambda0}
   \nu_j(\lambda_0)\geq 1-\|(V_-)^+\|_{L^\infty}-\lambda_0>0,\quad\forall j\in\N,\quad  \hbox{(by  \eqref{S,lambda}  and \eqref{nuj,lambda})}.
\end{align} 
    By the decay property of $V_\pm(x)\rightarrow0$ as $|x|\rightarrow\infty$, we  observe that the essential spectrum of \eqref{ev,lambda,mu} (resp. \eqref{ev,lambda,nu}) is contained in $[1+\lambda,\infty)$ (resp. $[1-\lambda,\infty)$).
    Thus, cf.   \cite[Theorem 10.33]{NPW}, one has
\begin{equation}\label{ev,muj,nuj} \begin{split}
 &\hbox{If }\mu_j(\lambda)<1+\lambda, \hbox{then } \mu_j(\lambda) \hbox{ is the }j\hbox{-th eigenvalue of \eqref{ev,lambda,mu}},
 \\
 &\hbox{If }   \nu_j(\lambda)<1-\lambda, 
 \hbox{then }   \nu_j(\lambda)  \hbox{ is the }j\hbox{-th eigenvalue of  \eqref{ev,lambda,nu}}.
 \end{split}\end{equation}   
In particular,  we observe that
\begin{equation}\label{ev,lambdaj}\begin{split}
&\hbox{If }\mu_j(\lambda_j)=0\hbox{ for some }\lambda_j\in \mathring J_-,\,\, j\in\N, \hbox{ then } \lambda_j   \hbox{ is an  eigenvalue of \eqref{ev,lambda,phi,+}};\\
&
  \hbox{If }  \nu_j(\lambda_j)=0 \hbox{ for some }\lambda_j\in\mathring J_+,\,\, j\in\N, 
  \hbox{ then } \lambda_j  \hbox{ is an   eigenvalue of \eqref{ev,lambda,phi,-}}.
\end{split}\end{equation}  
\begin{lem}[Properties of $\mu_j, \nu_j$]\label{lem:mu,nu}
For $j\in\N$, the mappings
\begin{align*}
   & \mu_j=\mu_j(\lambda): J_-=[-1,1-\|(V_-)^+\|_{L^\infty})\to \R 
   \\
&    \hbox{and } \nu_j=\nu_j(\lambda): J_+=(-1+\|(V_+)^-\|_{L^\infty}, 1]\to \R 
\end{align*}
are continuous, and $\mu_j$ (resp. $\nu_j$) is strictly increasing (resp. decreasing) in $\lambda$. 
\end{lem}
\begin{proof}   
Let $ \lambda\in  J_-$, and we are going to show the continuity of the mapping $\mu_j$ at $\lambda$.
For any $\tilde\lambda\in J_-$ and  any fixed $\psi\in H^1(\R)\backslash\{0\}$, we estimate
\begin{align*}
    & |R(\lambda,\psi)-R(\tilde\lambda,\psi)|
      \leq \frac{|\lambda-\tilde\lambda|}{\int_{\R} |\psi|^2\dx}\int_{\R} \Bigl[ \frac{1}{(1-V_--\lambda)(1-V_--\tilde\lambda)}|\d_x\psi|^2+ |\psi|^2 \Bigr]\dx 
   \\
   &\leq \frac{|\lambda-\tilde\lambda|}{\int_{\R} |\psi|^2\dx}
  \Bigl(  \frac{1}{1-\|(V_-)^+\|_{L^\infty}-\tilde\lambda}  \cdot \int_{\R}  \Bigl[  \frac{1}{ (1-V_--\lambda)}|\d_x\psi|^2+ (1+V_++\lambda)|\psi|^2 \Bigr]\dx
   \\
   &\qquad \qquad \qquad 
   +\Bigl[ \frac{-1+\|(V_+)^-\|_{L^\infty}-\lambda}{1-\|(V_-)^+\|_{L^\infty}-\tilde\lambda}+1 \Bigr]
   \cdot \int_{\R} |\psi|^2\dx\Bigr)
   \\
   &= |\lambda-\tilde\lambda|\Bigl(   \frac{1}{1-\|(V_-)^+\|_{L^\infty}-\tilde\lambda}   R(\lambda,\psi)
   +\Bigl[ \frac{-1+\|(V_+)^-\|_{L^\infty}-\lambda}{1-\|(V_-)^+\|_{L^\infty}-\tilde\lambda}+1 \Bigr]\Bigr).
\end{align*}
If $\tilde\lambda\in J_-$ satisfies  $|\lambda-\tilde\lambda|<\frac12 (1-\|(V_-)^+\|_{L^\infty}-\lambda)$ and hence 
$    \frac{2}{3(1-\|(V_-)^+\|_{L^\infty}-\lambda)}<\frac{1}{1-\|(V_-)^+\|_{L^\infty}-\tilde\lambda} <\frac{2}{ 1-\|(V_-)^+\|_{L^\infty}-\lambda},
$
then there exist two positive constants $C_1, C_2$ which are independent of $\tilde\lambda$ and $\psi$ (depending only on $\lambda, \|(V_+)^-\|_{L^\infty}, \|(V_-)^+\|_{L^\infty}$), such that
\begin{align*}
    & |R(\lambda,\psi)-R(\tilde\lambda,\psi)|
       \leq  |\lambda-\tilde\lambda|\Bigl(  C_1   R(\lambda,\psi)
   +C_2\Bigr).
\end{align*}
Thus for   $\tilde\lambda\in J_-$ satisfying 
$ |\lambda-\tilde\lambda|< \min\Bigl\{ \frac12 (1-\|(V_-)^+\|_{L^\infty}-\lambda), \, \frac{1}{C_1}\Bigr\},
$
we have
$$
(1-|\lambda-\tilde\lambda|C_1) R(\lambda,\psi)-|\lambda-\tilde\lambda|C_2\leq R(\tilde\lambda,\psi)
\leq (1+|\lambda-\tilde\lambda|C_1) R(\lambda,\psi)+|\lambda-\tilde\lambda|C_2,
$$
which, together with \eqref{muj,lambda}, implies for each $j\in\N$,
\begin{align*}
    (1-|\lambda-\tilde\lambda|C_1) \mu_j(\lambda)-|\lambda-\tilde\lambda|C_2\leq \mu_j(\tilde\lambda)
\leq (1+|\lambda-\tilde\lambda|C_1) \mu_j(\lambda)+|\lambda-\tilde\lambda|C_2.
\end{align*}
This implies the continuity of the mapping $\mu_j$ at $\lambda$.

By \eqref{R,lambda} and \eqref{muj,lambda},
   \begin{equation*}\label{muj,lambda,0}
         \mu_j(\lambda) = (1+\lambda)+ \mathop{\inf}\limits_{\substack{ U\subset H^1(\R)\hbox{ subspace}, \\  \dim(U)=j}}\,\max_{\psi\in U\backslash\{0\}} \frac{\displaystyle\int_{\R} \Bigl[ \frac{1}{1-V_--\lambda}|\d_x\psi|^2+ V_+ |\psi|^2 \Bigr]\dx}{\int_{\R} |\psi|^2\dx}.
     \end{equation*}
     Hence  $\mu_j(\lambda)$ is strictly increasing in $\lambda\in J_-$.



\smallbreak

Analogously, we can argue exactly as above,  with $ \lambda, \tilde\lambda,   V_+, V_-,  R, \mu$ replaced by $-\lambda, -\tilde\lambda,   -V_-,- V_+,  S, \nu$ respectively, to derive the continuity and the monotonicity  of the mapping $\nu_j$.
\end{proof}

We are now ready to prove Theorem \ref{thm:lambda}.
\begin{proof}[Proof of Theorem \ref{thm:lambda}]
 Let $m\in \N_0\cup\{\infty\}$ denote the number of negative eigenvalues $\mu$ of the eigenvalue problem \eqref{ev,mu}, that is, the eigenvalue problem \eqref{ev,lambda,mu} with $\lambda=-1$.
 We take these  negative eigenvalues
\begin{equation}\label{muj}
 \mu_j(-1)<0,\quad  j=1,\cdots,m \hbox{ if }m\in\N \,\hbox{ or }\,j\in\N\hbox{ if }m=\infty.
 \end{equation}
 By Lemma \ref{lem:mu,nu} and \eqref{muj,lambda0}, for each such $j$, there exists a unique $\lambda_j\in (-1,\lambda_0)\subset \mathring J_-$ such that 
 \begin{equation}\label{muj,lambdaj}
     \mu_j(\lambda_j)=0.
 \end{equation}
 By  \eqref{ev,lambdaj}, $\lambda_j\in \mathring J_-$ is an eigenvalue of \eqref{ev,lambda,phi,+}, and hence of the operator $\cL$ (by Lemma \ref{lem:cL}), which implies $\lambda_j\in I_-$ (by \eqref{ev,I-}). 
Furthermore, if for some $j$ and $k$, the functions $\mu_j,\cdots, \mu_{j+k}$ have the same zero with $\lambda_j=\cdots=\lambda_{j+k}$, 
that is, $0$ is a $(k+1)$-fold eigenvalue of \eqref{ev,lambda,mu} for this value of $\lambda=\lambda_j$, then  the associated eigenfunctions of the eigenvalue problem \eqref{ev,mu} are linearly independent. 
Thus, counting by multiplicity, we obtain at least $m$ eigenvalues inside $I_-$ of the operator $\cL$.

Vice versa, when $\lambda\in I_-\subset \mathring J_-$ is an eigenvalue of $\cL$, then $\lambda$ is an eigenvalue of \eqref{ev,lambda,phi,+}, and hence $\mu=0$ is an eigenvalue of \eqref{ev,lambda,mu}.  Furthermore, the eigenvalue $\mu=0$ is below  the essential spectrum $[1+\lambda,\infty)$.
Thus,  by \eqref{ev,muj,nuj}, there exists some $j\in\N$ such that $\mu_j(\lambda)=0$.
By the strict monotonicity in Lemma \ref{lem:mu,nu}, we have $\mu_j(-1)<0$.
By \eqref{ev,muj,nuj}, $\mu_j(-1)$ is a negative eigenvalue of \eqref{ev,mu}, which implies $j\in \{1,\cdots,m\}$ if $m\in\N$.
Since the zero $\lambda_j$ of $\mu_j$ is unique, we obtain $\lambda=\lambda_j$.
If $\lambda$ has multiplicity $(k+1)$ as an eigenvalue of $\cL$, then $\mu=0$ has multiplicity $(k+1)$ as an eigenvalue of \eqref{ev,lambda,mu}, and hence $\mu_j(\lambda)=\cdots=\mu_{j+k}(\lambda)=0$ for some $j\in\N$.
As before we find $\lambda=\lambda_j=\cdots=\lambda_{j+k}$.
This implies, counting by multiplicity, that the operator $\cL$ has at most $m$ eigenvalues in $I_-$. 
This completes the proof of (1) in Theorem \ref{thm:lambda} if $m\in\N$.
The above arguments show that it is also true if $m=\infty$.

 \smallbreak
 
 Part (2) in Theorem \ref{thm:lambda} follows exactly as above, if we replace $\lambda, m, \mu, I_-, J_-$ by $-\lambda, l, \nu, I_+, J_+$ respectively. 
\end{proof}

\subsection{Examples}\label{subs:example}
In this subsection we will give some examples, where the   Lax operator $L$ in \eqref{L}, or equivalently its unitary equivalence $\cL$ in \eqref{cL,intro},  may or may not possess eigenvalues inside the spectral gap $(-1,1)$.

In the following examples we will always assume the nonzero boundary condition at infinity \eqref{BC:q,varraphi} for the nowhere vanishing bounded potential $q$,   so that
\begin{align}\label{BC,Vpm}
     V_\pm\in L^\infty(\R; \R), \quad   V_\pm(x)\rightarrow 0 \hbox{ as }|x|\rightarrow\infty, 
\end{align}
where  $V_\pm$ are related to $q$ as follows:
\begin{align*}
    q=|q|e^{i\varphi},\quad u_\pm=\frac12(\partial_x\varphi)\pm |q|,\quad V_\pm=u_\pm\mp1.
\end{align*}
In order to show the existence of eigenvalues in some examples we will assume further the smallness assumption \eqref{smallness:upm}:
\begin{align} \label{Small,Vpm}
    \|(V_+)^-\|_{L^\infty}+\|(V_-)^+\|_{L^\infty}<2.
\end{align}

\begin{ex}[No eigenvalues]
For the   potentials of the following type
$$
 q=|q|e^{i\varphi}\in L^\infty(\R),\quad |q|-1\geq 0, \quad \varphi(x)=\int^x_{x_0} 2(|q|-1) \dy,
 $$
for some real number $x_0\in\R$, we have $\frac12\partial_x\varphi=|q|-1$ and 
 \begin{align*}
    u_+=   2|q|-1\geq 1,
    \quad u_-=-1.
\end{align*} 
Therefore by Theorem \ref{thm:cL} (with $c=1$) there are no eigenvalues of the Lax operator $L$ inside $(-1,1)$. 
 If we assume further decay condition \eqref{potential,decay} at infinity: $(1+x^4)(q-e^{i\theta_\pm})\in L^1(\R^\pm)$ for some $\theta_\pm\in\R$,
 then by \cite{DPVV} there are only finitely many discrete eigenvalues which are  located in $(-1,1)$, and hence the spectrum of $L$ consists only of essential spectrum $(-\infty,-1]\cup[1,\infty)$.
\end{ex}

\begin{ex}[Existence of eigenvalues]\label{ex:ev}
\begin{enumerate}[(i)]
    \item Let  the potentials  $V_\pm$ satisfy \eqref{BC,Vpm}-\eqref{Small,Vpm}  and the following assumption:
 \begin{equation}\label{V+,R}
     \int_{-R}^R V_+(x)\dx<0\hbox{ and }V_+\leq 0\hbox{ outside }[-R, R],
 \end{equation} 
 for some $R>0$.

 We are going to show  the existence of negative eigenvalues for \eqref{ev,mu,intro} (i.e. \eqref{ev,mu}): 
\begin{equation}\label{ev,mu,app}
-\d_x\Bigl( \frac{1}{2-V_-}\d_x\psi\Bigr)+V_+\psi=\mu \psi
\end{equation}
by the Rayleigh-Ritz method, cf. \cite[Theorem 10.23]{NPW}.
More precisely, we can try to prove that the eigenvalue problem \eqref{ev,mu,app} has at least $n$ negative eigenvalues (with $n\in \N$ chosen as we like): We have to select linearly independent $\psi_1,\cdots, \psi_n\in H^1(\R)$, compute the numbers
    $$
    A_{jk}:=\int_{\R} \Bigl( \frac{1}{2-V_-} \d_x\psi_j\d_x\bar \psi_k+V_+\psi_j\bar\psi_k\Bigr) \dx,\quad j,k=1,\cdots,n,
    $$  and show that the matrix $A=(A_{jk})$ has $n$ negative eigenvalues (by explicit calculation or by computer assistance).
    If $A$ turns out to have less than $n$ negative eigenvalues, $n$ was chosen too large, and we can retry with smaller $n$. 
    In the following we will just choose $n=1$ to show the existence of eigenvalues.

We take $\tilde\psi\in C_c^\infty(\R)$ such that $\tilde\psi=1$ on $(-1,1)$ and $\tilde\psi=0$ outside $(-2,2)$.
Let $\psi(x)=\tilde\psi(\frac{x}{k})$, $x\in \R$, where $k\geq R$ is chosen large enough such that
 (by virtue of the assumptions \eqref{Small,Vpm} and \eqref{V+,R})
 \begin{align*}
    A_{11}&=\int_{\R} \Bigl( \frac{1}{2-V_-} |\d_x\psi|^2 +V_+|\psi|^2\Bigr) \dx 
    \\
&     \leq \frac{1}{2-\|(V_-)^+\|_{L^\infty}}\Bigl(\frac{1}{k} \|\d_x\tilde\psi\|_{L^2}^2\Bigr)
      +\Bigl( \int_{-R}^R V_+ \dx\Bigr)  <0.
 \end{align*}
 Hence there exists at least one negative  eigenvalue $\mu$ of \eqref{ev,mu,app} (i.e. \eqref{ev,mu,intro}), and thus there exists at least one eigenvalue $\lambda\in I_-=(-1,-1+\|(V_+)^{-}\|_{L^\infty}]$ of the operator  $L,\cL$   (by Theorem \ref{thm:lambda}). 
 
 \item  Analogously, if the potentials  $V_\pm$ satisfy \eqref{BC,Vpm}-\eqref{Small,Vpm} and the   assumption:
 \begin{equation*} 
     \int_{-R}^R V_-(x)\dx>0\hbox{ and }V_-\geq 0\hbox{ outside }[-R, R],
 \end{equation*} 
 for some $R>0$, then there exists at least one negative eigenvalue $\nu$ of the eigenvalue problem \eqref{ev,nu,intro} (i.e. \eqref{ev,nu}): 
\begin{equation}\label{ev,nu,app}
-\d_x\Bigl( \frac{1}{2+V_+}\d_x\psi\Bigr)-V_-\psi=\nu \psi,
\end{equation}
and hence the operator $L$ has at least one eigenvalue $\lambda\in I_+=(1-\|(V_-)^+\|_{L^\infty}, 1]$. 
\end{enumerate} 
\end{ex}

\begin{ex}[Potentials with piecewise-constant amplitude and phase velocity]\label{ex:2} 
We take the potentials of the following form
\begin{equation}\label{potential,linear} 
    q(x)=\left\{\begin{array}{cc}
       e^{i\theta}  & x< a,  \\
       Ae^{i(\theta+v(x-a))}  & a\leq x\leq b,\\
       e^{i(\theta+v(b-a))} & x> b,
    \end{array}\right.
 \end{equation}
 where  $(a,b)$ is an arbitrary nonempty finite interval on $\R$, and $\theta\in [0,2\pi)$, $A>0$ and $v\in\R$ are  real numbers.
 
If $A\geq 1$ and $v\in \R$ such that
\begin{align}
    \frac12|v|\leq A-1, 
\end{align}
then $u_+\geq 1$ and $u_-\leq -1$, and hence by Theorem \ref{thm:cL} (with $c=1$)  the operator $L$ has no eigenvalues inside $(-1,1)$, and thus no eigenvalues by \cite{DPVV} since obviously $q$ in \eqref{potential,linear} satisfies \eqref{potential,decay} with   $\theta_-=\theta$ and $\theta_+=\theta+v(b-a)$.

In the following we will take  
\begin{align}
    A\in (0,1) \hbox{ and }\frac12|v|<A+1,
\end{align}
and show the existence of exactly  one or two eigenvalue(s) of $L$
in different cases:
\begin{enumerate}[(I)]
    \item Exactly two simple eigenvalues, located in $(-1, -\frac12v-A]$ and $[-\frac12v+A,1)$ respectively, if
    $$A\in (0,1)\hbox{ and }\frac12|v|<1-A.$$

In this case  $V_\pm$ read as the following piecewise constant functions
\begin{equation}\label{Vpm,piece}
V_+(x)=\left\{\begin{array}{ll}-\varepsilon& \hbox{ on }[a,b],\\ 0&\hbox{ otherwise,} \end{array}\right. \quad 
 V_-(x)=\left\{\begin{array}{ll} \delta& \hbox{ on }[a,b],\\ 0&\hbox{ otherwise,} \end{array}\right.\end{equation}
 where $\varepsilon=-\frac12v-A+1$, $\delta=\frac12v-A+1$ are two positive constants located inside $(0,2-2A)$, such that the smallness condition \eqref{Small,Vpm} holds:
 $$\|(V_+)^-\|_{L^\infty}+\|(V_-)^+\|_{L^\infty}=\varepsilon+\delta=2-2A<2.$$  
We conclude from Example \ref{ex:ev} that in this case there are at least two eigenvalues of the Lax operator $L$, located in $(-1, -1+\varepsilon]$ and $[1-\delta,1)$ respectively. We are going to show that there are indeed exactly two eigenvalues.
 
 We consider the eigenvalue problem \eqref{ev,mu,intro} (i.e. \eqref{ev,mu,app} above).
An integration by parts shows that any negative eigenvalue $\mu$  lies in $(-\varepsilon, 0)$.
 More precisely, \eqref{ev,mu,app} reads as   
\begin{align*}
&\d_{xx}\psi=k^2 \psi\hbox{ outside }[a,b],
\\
    &-\d_{xx}\psi=\kappa^2 \psi\hbox{ on }[a,b], 
\end{align*}
where the two positive constants $k,\kappa$ read as $k=\sqrt{-2\mu}<  \sqrt{2\varepsilon}<2$ and  $\kappa=\sqrt{(2-\delta)(\mu+\varepsilon)}<\sqrt{\varepsilon(2-\delta)}$.
We then search for the non-trivial solution of the following form which are continuous  at $a$ and $b$:
\begin{align*}
    \psi(x)=\left\{\begin{array}{cc}
        \frac{ce^{ia\kappa}+de^{-ia\kappa}}{e^{ak}}e^{kx}& \hbox{ for }x\leq a,  \\
         ce^{i\kappa x}+de^{-i\kappa x}& \hbox{ for }x\in [a,b],\\
         \frac{ce^{ib\kappa}+de^{-ib\kappa}}{e^{-bk}}e^{-kx}&\hbox{ for }x\geq b.
    \end{array}\right.
\end{align*}
The continuity conditions of $\frac{1}{2-V_-}\d_x\psi$ at $a$ and $b$ read then as
\begin{align*}
    \begin{pmatrix}
    e^{ia\kappa }(\frac{1}{2-\delta}i\kappa-\frac12k) & e^{-ia\kappa}(-\frac{1}{2-\delta}i\kappa-\frac12k)
    \\
    (\frac1{2-\delta}i\kappa+\frac12k)e^{ib\kappa}& (-\frac{1}{2-\delta}i\kappa+\frac12k)e^{-ib\kappa}
    \end{pmatrix}
    \begin{pmatrix}c\\d\end{pmatrix}
    =\begin{pmatrix}
    0\\0
    \end{pmatrix}.
\end{align*}
In order to have a non-trivial solution, the determinant of the matrix on the lefthand side should vanish:
\begin{align*}
  &0= \cot((b-a)\kappa)- \frac{1}{2\sqrt{2(2-\delta)}}
  f(\kappa),
  \\
&  \hbox{ where }f(\kappa):=\frac{(4-\delta)\kappa^2- \varepsilon (2-\delta)^2}{\kappa\sqrt{ \varepsilon(2-\delta)-\kappa^2} },
\end{align*} 
for $\kappa\in (0,\sqrt{\varepsilon(2-\delta)})$.
It is straightforward to check that the function $f(\kappa)$
has a strictly positive derivative $\frac{  \varepsilon(2-\delta) (\delta \kappa^2 +\varepsilon(2-\delta)^2 )  }{ \kappa^2\sqrt{ \varepsilon(2-\delta)-\kappa^2} ^3}>0$  
  for $\kappa\in (0,\sqrt{\varepsilon(2-\delta)})$, and $f(\kappa)$ tends to $-\infty$ as $\kappa\rightarrow0_+$ while to $+\infty$ as $\kappa\rightarrow(\sqrt{\varepsilon(2-\delta)})_-$.
Hence there exists a unique $\kappa\in (0,\sqrt{\varepsilon(2-\delta)})$ such that the above equality holds, and hence the eigenvalue problem \eqref{ev,mu,intro} has a single negative eigenvalue $\mu=-(\varepsilon-\frac{\kappa^2}{2-\delta})$.
Thus the operator $L$ has a single eigenvalue  inside $(-1,-1+\varepsilon]$.

Replacing $V_\pm$ by $- V_\mp$, that is, exchanging $\varepsilon$ and $\delta$, the above argument shows that  the eigenvalue problem \eqref{ev,nu,intro} (i.e. \eqref{ev,nu,app} above) has a unique negative eigenvalue, and thus the operator $L$ has a single eigenvalue inside $[1-\delta,1)$.

To conclude, if $A\in (0,1)$ and $\frac12|v|< 1-A$,  the spectra of the operator $L$ associated to the   potential \eqref{potential,linear}   consists of essential spectrum $(-\infty,-1]\cup[1,\infty)$ and two simple eigenvalues, located in $(-1,-\frac12v-A]$ and $[-\frac12v+A,1)$, respectively.

 \item Exactly one simple eigenvalue, located in $(-1,-\frac12v-A]$, if   $$-1-A<\frac12v\leq A-1<0.$$  
 
 In this case, $V_\pm$ are given in \eqref{Vpm,piece}, with the parameter $\varepsilon \in [2-2A, 2)\subset (0,2)$ and $\delta\in (-2A,0]$.
 We analyze the eigenvalue problem \eqref{ev,mu,intro} as in the above case (I), and derive a single negative eigenvalue $\mu=-(\varepsilon-\frac{\kappa^2}{2-\delta})$.
Since the eigenvalue problem \eqref{ev,nu,intro} has no negative eigenvalues for $V_-\leq 0$, the operator $L$ has exactly one eigenvalue located in $(-1,-1+\varepsilon]=(-1,-\frac12v-A]$.
 
 \item Exactly one simple eigenvalue, located in   $[-\frac12v+A, 1)$ if 
 $$0< 1-A\leq \frac12v < 1+A.$$
 
It follows by   exchanging the values of $\varepsilon$ and $\delta$ in the above case (II),  that the eigenvalue problem \eqref{ev,nu,intro} has a single negative eigenvalues and \eqref{ev,mu,intro} has no negative eigenvalues, and hence $L$ has exactly one eigenvalue located in $[1-\delta, 1)=[-\frac12v+A,1)$.


\end{enumerate}
\end{ex}

\begin{ex}[Piecewise-constant case, continued]\label{ex:disjoint}
We take the potentials of the following form
\begin{equation}\label{potential,linear,c} 
    q(x)=\left\{\begin{array}{cc}
       e^{i\theta}  & x< -2,  \\
       (1-\frac12\delta)e^{i(\theta+\delta(x+2))} & x\in [-2,-1],\\
       e^{i(\theta+\delta)} & x\in (-1,1),\\
       (1-\frac12\varepsilon)e^{i(\theta+\delta -\varepsilon(x-1))}  & x\in [1,2],\\
       e^{i(\theta+\delta-\varepsilon)} & x>2,
    \end{array}\right.
 \end{equation}
 where    $\theta\in [0,2\pi)$ and  $0< \varepsilon, \delta$ such that $\varepsilon+\delta<2$. 
 Then the corresponding $V_\pm$ are supported on disjoint intervals as follows  
 $$V_+(x)=\left\{\begin{array}{ll}-\varepsilon& \hbox{ on }[1,2],\\ 0&\hbox{ otherwise,} \end{array}\right. \quad 
 V_-(x)=\left\{\begin{array}{ll} \delta& \hbox{ on }[-2,-1],\\ 0&\hbox{ otherwise.} \end{array}\right.$$
 
 We remark that in view of Example \ref{ex:2}, it is easy to see that the operator $L$ has no eigenvalues if $\varepsilon=\delta=0$, and has exactly one eigenvalue if $\varepsilon=0$, $\delta\in (0,2)$ or $\varepsilon\in (0,2)$, $\delta=0$.
 In the following we follow exactly the arguments   in Example \ref{ex:2} to show  briefly that $L$ has exactly   two eigenvalues if $\varepsilon, \delta> 0$ and $\varepsilon+\delta<2$. 
     
   We can rewrite the eigenvalue problem \eqref{ev,mu} with $\mu\in (-\varepsilon,0)$ as
\begin{align*}
    &\d_{xx}\psi=k^2 \psi\hbox{ on }(-\infty,-2)\cup (-1,1)\cup (2,\infty)
    \\
    &-\d_{xx}\psi=\kappa_1^2  \psi\hbox{ on }[-2,-1],
    \\
    &-\d_{xx}\psi=\kappa_2^2\psi\hbox{ on }[1,2],
\end{align*}
where $k=\sqrt{-2\mu}\in (0,\sqrt{2\varepsilon})$, $\kappa_1=i\sqrt{-\mu(2-\delta)}=i\sqrt{1-\frac\delta2}k\in i\R^+$, and $\kappa_2=\sqrt{2(\mu+\varepsilon)} =\sqrt{2\varepsilon-k^2}\in (0,\sqrt{2\varepsilon})$.
And we  search for a non-trivial solution $\psi\in H^1(\R)$, such that $\psi$ itself as well as $\frac{1}{2-V_-}\d_x\psi$ are continuous    at $\pm 1$ and $\pm 2$.
By a long but straightforward calculation, the fact that the  determinant of the $4\times 4$ matrix generated  by  these continuity conditions vanishes is equivalent to   the following equality  
\begin{align*}
 0=g(k;\varepsilon,\delta),
 \hbox{ where }g(k;\varepsilon,\delta)&=k^2-\varepsilon+k\sqrt{2\varepsilon-k^2}\cot(\sqrt{2\varepsilon-k^2})\\  
&  +\frac{\varepsilon\delta e^{-4k}}{4-\delta+2\sqrt{2(2-\delta)}\coth(\sqrt{1-\frac\delta2}k)}.
\end{align*} 
We claim that there exists a unique zero $k\in (0,\sqrt{2\varepsilon})$ of $g(k;\varepsilon, \delta)$.
Indeed, the existence of a zero $k\in (0,\sqrt{\varepsilon})$ follows immediately from  the fact that $g(0_+)<0$, and $g>0$ on $[\sqrt{\varepsilon}, \sqrt{2\varepsilon})$ (since $\sqrt{2\varepsilon-k^2}\leq \sqrt{\varepsilon}<\sqrt{2}<\frac{\pi}{2}$ for $k\in [\sqrt{\varepsilon}, \sqrt{2\varepsilon})$).
The uniqueness is however non-trivial,
and we consider the cases $\varepsilon\leq 0.04$ and $\varepsilon\geq 0.04$ separately:
\begin{itemize}
    \item Case $\varepsilon\leq 0.04$.
    In this case one may check that $g'(k)>0$ on $(0,\sqrt{\varepsilon})$ and hence there exists a unique zero $k\in (0,\sqrt{\varepsilon})$.
    \item Case $\varepsilon\geq 0.04$. In this case we use a computer-assisted proof to show the uniqueness.
    More precisely, we cover the parameter range $\varepsilon\geq 0.04$, $\delta\geq 0$, $\varepsilon+\delta\leq 2$ by (finitely many) squares with side-length $0.03$, and on each square $S=[\underline{\varepsilon}, \overline{\varepsilon}] \times [\underline{\delta}, \overline{\delta}]$ we evaluate $g(I_j;S)$, $j=1,\cdots,N$ using interval arithmetic \cite{Rump}, where   $\{I_j\}_{j=1}^N$ is a subdivision of  $[0,\sqrt{\overline{\varepsilon}}]$.
    For all squares $S$, the interval evaluation showed that there exists a unique index $j_0\in \{1,\cdots,N\}$ such that $g(\cup_{j\leq j_0-1}I_j; S)\subset (-\infty, 0)$, $g(\cup_{j\geq j_0+1}I_j; S)\subset (0,\infty)$ and $\frac{\partial g}{\partial k}(I_{j_0}; S)\subset (0,\infty)$.
    This proves (rigorously) the uniqueness of the zero $k\in (0,\sqrt{\varepsilon})$ of $g(k;\varepsilon,\delta)$, for all $(\varepsilon, \delta)\in S$. 
    The covering property of the squares $S$ gives the desired result.
     Note that we have to exclude a ``small" parameter range near $\varepsilon = 0$ 
since the singularity of $g$ at $\varepsilon = 0$ is difficult to capture by interval evaluations.
\end{itemize}

\smallbreak

Analogously we consider the eigenvalue problem \eqref{ev,nu} with $\nu\in (-\delta,0)$. The existence of non-trivial solution $\psi\in H^1(\R)$ is again  equivalent to the equation $0=g(k;\delta,\varepsilon)$ for $k\in (0,\sqrt{2\delta})$.
Hence there exists a unique zero $k\in (0,\sqrt{\delta})$.

To conclude, if $\varepsilon, \delta>0$ satisfy $\varepsilon+\delta<2$, then the operator $L$ has exactly two simple eigenvalues, located in $(-1,-1+\varepsilon]$ and $[1-\delta,1)$ respectively.

\end{ex}



\appendix

\setcounter{equation}{0}
\section{Characterization of eigenvalues when $u_+=1$ or $u_-=-1$ by a family of compact operators} 
In the appendix we consider the eigenvalue problem    
$$\mathcal{L}\Psi=\lambda\Psi,\quad \mathcal{L}=\left( \begin{matrix}
-u_- &  i\d_x \\  i\d_x & -u_+
 \end{matrix} \right): H^1(\R;\C^2)\to L^2(\R;\C^2),\quad \lambda\in (-1,1),$$ when the potentials $u_\pm$ satisfy
 \begin{equation}\label{CaseV+} \begin{split}
     &u_-=-1, \quad V_+=u_+-1\in L^\infty(\R;\R),\\
     &\hbox{such that }V_+(x)\rightarrow 0\hbox{ as }|x|\rightarrow\infty \hbox{ and }\|(V_+)^-\|_{L^\infty}<2,
 \end{split} 
 \end{equation}
 or 
 \begin{equation}\label{CaseV-} \begin{split}
     &u_+=1, \quad V_-=u_-+1\in L^\infty(\R;\R),\\
     &\hbox{such that }V_-(x)\rightarrow 0\hbox{ as }|x|\rightarrow\infty \hbox{ and }\|(V_-)^+\|_{L^\infty}<2.
 \end{split} 
 \end{equation} 
 If $u_-=-1$, then $\lambda+u_-\neq 0$ for all $\lambda\in (-1,1)$, and  by Lemma \ref{lem:cL} it is equivalent to consider the eigenvalue problem
 \eqref{lambda:phi,+} with $\lambda\in (-1,1)$:
 \begin{align}
          & \frac{1}{1-\lambda}(-\d_{xx}+1-\lambda^2)\phi + V_+ \phi=0. \label{ev,lambda,V+} 
    \end{align}
Similarly if $u_+=1$, then  $\lambda+u_+\neq 0$ for all $\lambda\in (-1,1)$, and it is equivalent to consider the eigenvalue problem
 \eqref{lambda:phi,-} with $\lambda\in (-1,1)$:
    \begin{align} \label{ev,lambda,V-} \frac{1}{1+\lambda}(-\d_{xx}+1-\lambda^2)\phi-V_-\phi=0.
    \end{align}

Theorem \ref{thm:cL} and Theorem \ref{thm:lambda} imply that in the case \eqref{CaseV+}, all the eigenvalues of \eqref{ev,lambda,V+} (and hence of $\cL$) inside $(-1,1)$ lie in $I_-=(-1, -1+\|(V_+)^-\|_{L^\infty}]$, and the number of  eigenvalues in $I_-$ is the same as the number of all  negative eigenvalues of \eqref{ev,mu,intro}:
\begin{align}
    &\label{ev,mu,V+}
       -\frac12\d_{xx}\psi + V_+\psi= \mu \psi, 
\end{align}
  counted by multiplicity,
  while for the case \eqref{CaseV-},  all  the eigenvalues of \eqref{ev,lambda,V-} (and hence of $\cL$) inside $(-1,1)$ lie in $I_+=[1-\|(V_-)^+\|_{L^\infty}, 1)$, and the number of  eigenvalues in $I_+$ is the same as the number of all  negative eigenvalues of \eqref{ev,nu,intro}:
    \begin{align}
     \label{ev,nu,V-}   
     -\frac12\d_{xx} \psi - V_-\psi= \nu \psi,
\end{align} 
counted by multiplicity.

In this appendix we are going to study the eigenvalue problems \eqref{ev,lambda,V+} and  \eqref{ev,lambda,V-}  for $\lambda\in (-1,1)$  directly, and we reformulate them as
\begin{align}\label{ev,compact,V+}
    (-\d_{xx}+1-\lambda^2)^{-1}V_+\phi=-\frac{1}{1-\lambda}\phi,
\end{align}
and
\begin{align}\label{ev,compact,V-}
    (-\d_{xx}+1-\lambda^2)^{-1}(-V_-)\phi=- \frac{1}{1+\lambda}\phi.
\end{align}
In Subsection \ref{subs:app1} we will introduce a symmetric and compact operator $K_\beta$, $\beta>0$ and study its negative eigenvalues.
We will then apply the obtained results to the eigenvalue problems \eqref{ev,compact,V+} and \eqref{ev,compact,V-}, to   characterize the eigenvalues $\lambda\in (-1,1)$ of $\cL$ when $u_-=-1$ or $u_+=1$ in Subsection \ref{subs:app2}.

\subsection{Analysis of a symmetric and compact operator $K_\beta$, $\beta>0$ on $H^1(\R)$}\label{subs:app1}
The above formulation \eqref{ev,compact,V+}-\eqref{ev,compact,V-} of the eigenvalue problems motivates us to define the operator 
 \begin{equation}\label{Kbeta}
 K_\beta=(-\d_{xx}+\beta)^{-1}V: H^1(\R)\to H^1(\R)
 \end{equation}
 where   $\beta>0$ is a fixed positive constant, and  $V\in L^\infty(\R)$ vanishes as infinity.

Then the operator $K_\beta$ is a symmetric and compact operator, when we endow $H^1(\R)$ with the inner product
 \begin{equation}\label{H1beta}
 \langle u, v\rangle_\beta:=\langle \d_x u, \d_x v\rangle_{L^2(\R)}+\beta\langle u, v\rangle_{L^2(\R)}.
 \end{equation}
It has an ONB of eigenfunctions and an associated eigenvalue sequence converging to $0$.
 We take all the negative eigenvalues
 \begin{equation}\label{mubeta}
 \{\gamma_j(\beta)\}_{j\in M}\subset (-\infty, 0),
 \end{equation}
 and order them non-decreasingly
 \begin{align*}
 |\gamma_1(\beta)|\geq  |\gamma_2(\beta)|\geq \cdots> 0.
 \end{align*}
 Here the set $M$ can be $\N$ or a finite set $\{1,\cdots,m\}$ or the empty set $\emptyset$.

 As before, we denote by $V^+$ resp. $V^-$ the positive resp. negative part of $V$,
 and we have the following properties of the set $M$ and the function $\gamma_j$, $j\in M$.
\begin{lem}[Properties of the set $M$ and the function  $\gamma_j$]\label{lem:N,mu}
The following holds true:
 \begin{enumerate}[(i)]
     \item The set $M=M^{(\beta)}$ is independent of $\beta\in \R^+$.
     \item For each $j\in M$, the function
     $$
     \gamma_j: (0,\infty)\to (-\infty, 0)
     $$
     is strictly increasing and   continuous, and for any $\beta\in (0,\infty)$,
     \begin{align}
       & |\gamma_j(\beta)|\leq \frac{\|V^-\|_{L^\infty}}{\beta}, \label{gamma,Linfty}
        \\
        &\limsup_{\tilde\beta\rightarrow\beta}\frac{|\gamma_j(\beta)-\gamma_j(\tilde\beta)|}{|\beta-\tilde\beta|}
         \leq \frac{\|V^-\|_{L^\infty}}{\beta^2}.\label{gamma,Lip}
     \end{align}
     Moreover, $\gamma_j$ is Lipschitz continuous on $[\delta,\infty)$, with Lipschitz constant $\delta^{-2}\|V^-\|_{L^\infty}$, for every $\delta>0$.
 \end{enumerate}
 \end{lem}
 \begin{proof} 
 This lemma follows by a similar argument as in the proof of Lemma \ref{lem:mu,nu}.
 For $\phi\in H^1(\R)\backslash\{0\}$ and $\beta>0$, we derive from \eqref{Kbeta} and \eqref{H1beta} that
 \begin{align*}
     \frac{\langle K_\beta \phi, \phi\rangle_\beta}{\langle \phi, \phi\rangle_\beta}
     =\frac{\langle V\phi, \phi\rangle_{L^2}}{\|\d_x\phi\|_{L^2}^2+\beta\|\phi\|_{L^2}^2}.
 \end{align*}
Poincar\'e's min-max principle (recalling \eqref{muj,lambda}-\eqref{nuj,lambda}) implies for any $j\in M^{(\beta)}$,
\begin{align}\label{minmax}
    \gamma_j(\beta)=\min_{U\subset H^1(\R)\hbox{ subspace},\, \dim(U)=j}\,\max_{\phi\in U\backslash\{0\}}
    \frac{\langle V\phi, \phi\rangle_{L^2}}{\|\d_x\phi\|_{L^2}^2+\beta\|\phi\|_{L^2}^2}.
\end{align}
The minimum is attained at
$$
U=U_j^{(\beta)}:=\hbox{\textrm{Span}}\{\phi_1(\beta), \cdots,\phi_j(\beta)\},
$$
where $\phi_l(\beta)$ denotes an eigenfunction of $K_\beta$ associated with $\gamma_l(\beta)$, and $\phi_1(\beta), \cdots, \phi_j(\beta)$ are chosen $\langle\cdot, \cdot\rangle_\beta$-orthonormal.

Let $\beta_1\in (0,\infty)$ and $j\in M^{(\beta_1)}$, so that $\gamma_j(\beta_1)<0$.
Then \eqref{minmax} implies
\begin{align*}\label{u<0}
    \frac{\langle V\phi, \phi\rangle_{L^2}}{\|\d_x\phi\|_{L^2}^2+\beta_1\|\phi\|_{L^2}^2}<0,
    \quad \forall \phi\in U_j^{(\beta_1)}\backslash\{0\},
\end{align*}
and thus $\langle V\phi, \phi\rangle_{L^2}<0$, and hence  
\begin{align} 
   \frac{ \langle V\phi, \phi\rangle_{L^2} }{\|\d_x\phi\|_{L^2}^2+\beta\|\phi\|_{L^2}^2}<0,
   \quad \forall \beta\in (0,\infty),
    \quad \forall \phi\in U_j^{(\beta_1)}\backslash\{0\}.
\end{align}
By the $L^2$-compactness of the unit sphere in the finite-dimensional space $U_j^{(\beta_1)}$, we conclude that
\begin{align*}
    \max_{\phi\in U_j^{(\beta_1)}\backslash\{0\}}
    \frac{\langle V\phi, \phi\rangle_{L^2}}{\|\d_x\phi\|_{L^2}^2+\beta\|\phi\|_{L^2}^2}
   = \max_{\phi\in U_j^{(\beta_1)}, \, \|\phi\|_{L^2}=1}
    \frac{\langle V\phi, \phi\rangle_{L^2}}{\|\d_x\phi\|_{L^2}^2+\beta }<0,
\end{align*}
and hence
\begin{align*}
     \min_{U\subset H^1(\R)\hbox{ subspace},\, \dim(U)=j}\,\max_{\phi\in U\backslash\{0\}}
    \frac{\langle V\phi, \phi\rangle_{L^2}}{\|\d_x\phi\|_{L^2}^2+\beta\|\phi\|_{L^2}^2}<0,\quad\forall \beta\in (0,\infty).
\end{align*}
This implies $j\in M^{(\beta)}$ for all $\beta\in (0,\infty)$, and we have proved $M^{(\beta_1)}\subset M^{(\beta)}$ for all $\beta_1, \beta\in (0,\infty)$.
The assertion (i) follows.

 Let $j\in M$ and $0<\beta<\beta_1<\infty$, so that 
 \begin{align*} 
    \frac{\langle V\phi, \phi\rangle_{L^2}}{\|\d_x\phi\|_{L^2}^2+\beta\|\phi\|_{L^2}^2}
   < \frac{\langle V\phi, \phi\rangle_{L^2}}{\|\d_x\phi\|_{L^2}^2+\beta_1\|\phi\|_{L^2}^2}<0,
   \quad \forall \phi\in U_j^{(\beta_1)}\backslash\{0\}.
 \end{align*}
 By the above compactness argument again we deduce
 \begin{align*} 
   \max_{\phi\in U_j^{(\beta_1)}\backslash\{0\}} \frac{\langle V\phi, \phi\rangle_{L^2}}{\|\d_x\phi\|_{L^2}^2+\beta\|\phi\|_{L^2}^2}
   < \max_{\phi\in U_j^{(\beta_1)}\backslash\{0\}} \frac{\langle V\phi, \phi\rangle_{L^2}}{\|\d_x\phi\|_{L^2}^2+\beta_1\|\phi\|_{L^2}^2} =\gamma_j(\beta_1).
 \end{align*}
 This implies $\gamma_j(\beta)<\gamma_j(\beta_1)$ by the min-max principle, and hence $\gamma_j:(0,\infty)\to (-\infty,0)$ is strictly increasing.
 
 Let $\beta\in (0,\infty)$ and we are going to show the  continuity of $\gamma_j$ at $\beta$.
 From the above argument we know that, for all $\phi\in U_j^{(\beta)}\backslash\{0\}$, $\langle V\phi, \phi\rangle_{L^2}<0$ and hence
 \begin{align*}
     \langle V^+\phi, \phi\rangle_{L^2}<\langle V^-\phi, \phi\rangle_{L^2},
 \end{align*}
 and thus
 \begin{align*}
     |\langle V\phi, \phi\rangle_{L^2}|=\langle V^-\phi, \phi\rangle_{L^2}-\langle V^+\phi, \phi\rangle_{L^2}\leq \|V^-\|_{L^\infty}\|\phi\|_{L^2}^2.
 \end{align*}
This implies 
\begin{align*}
     \frac{|\langle V\phi, \phi\rangle_{L^2}|}{\|\d_x\phi\|_{L^2}^2+\beta\|\phi\|_{L^2}^2}
     \leq \frac{\|V^-\|_{L^\infty}}{\beta},\quad\forall j\in M, \quad \forall \phi\in U_j^{(\beta)}\backslash\{0\},
\end{align*}
and hence the estimate  \eqref{gamma,Linfty} by the min-max principle \eqref{minmax}. 
Furthermore, we calculate for all $\tilde\beta\in (0,\infty)$ and all $\phi\in H^1(\R)\backslash\{0\}$ that
 \begin{align*}
     \Bigl| \frac{\langle V\phi, \phi\rangle_{L^2}}{\|\d_x\phi\|_{L^2}^2+\beta\|\phi\|_{L^2}^2}
       -\frac{\langle V\phi, \phi\rangle_{L^2}}{\|\d_x\phi\|_{L^2}^2+\tilde\beta\|\phi\|_{L^2}^2}\Bigr|
       &=\frac{|\langle V\phi, \phi\rangle_{L^2}| \cdot\|\phi\|_{L^2}^2 |\beta-\tilde\beta|}
       {(\|\d_x\phi\|_{L^2}^2+\beta\|\phi\|_{L^2}^2)\cdot(\|\d_x\phi\|_{L^2}^2+\tilde\beta\|\phi\|_{L^2}^2)}
       \\
       &\leq  \frac{|\langle V\phi, \phi\rangle_{L^2}| }{  \beta\|\phi\|_{L^2}^2  }
       \cdot \frac{ |\beta-\tilde\beta|}{ \tilde\beta }
     \leq   \frac{ \|V^-\|_{L^\infty} |\beta-\tilde\beta|}{\beta \tilde\beta}.
 \end{align*} 
 We immediately have  
 \begin{align*}
     \gamma_j(\beta)
     &= \max_{\phi\in U_j^{(\beta)}\backslash\{0\}}\frac{\langle V\phi, \phi\rangle_{L^2}}{\|\d_x\phi\|_{L^2}^2+\beta\|\phi\|_{L^2}^2}
     \\
     &\geq \max_{\phi\in U_j^{(\beta)}\backslash\{0\}}\frac{\langle V\phi, \phi\rangle_{L^2}}{\|\d_x\phi\|_{L^2}^2+\tilde \beta\|\phi\|_{L^2}^2}
     -\frac{\|V^-\|_{L^\infty}|\beta-\tilde\beta|}{\beta\tilde\beta}.
 \end{align*} 
 We apply the min-max principle in \eqref{minmax} to derive\begin{align*}
     \gamma_j(\beta)  
     \geq \gamma_j(\tilde\beta)-\frac{\|V^-\|_{L^\infty}|\beta-\tilde\beta|}{\beta\tilde\beta}.
 \end{align*} 
As $\beta, \tilde \beta\in (0,\infty)$ are chosen arbitrarily, we have similarly 
 \begin{align*}
     \gamma_j(\tilde\beta) 
     \geq \gamma_j(\beta)
     -\frac{\|V^-\|_{L^\infty}|\beta-\tilde\beta|}{\beta\tilde\beta}.
 \end{align*}  
Hence
 $$
 |\gamma_j(\beta)-\gamma_j(\tilde\beta)|\leq   \frac{\|V^-\|_{L^\infty}}{\beta\tilde\beta} |\beta-\tilde\beta|,
 $$
 and the estimate \eqref{gamma,Lip} follows.
 The Lipschitz continuity of $\gamma_j$ on $[\delta,\infty)$ with $\delta>0$ follows correspondingly.
 \end{proof}
 
 \subsection{Eigenvalues $\lambda\in (-1,1)$ of $\cL$ when $u_-=-1$ or $u_+=1$}\label{subs:app2}
 Assume \eqref{CaseV+}, and we now come back to the eigenvalue problem \eqref{ev,lambda,V+}, i.e. \eqref{ev,compact,V+} for $\lambda\in (-1,1)$.
For any $\beta>0$, let $\{\gamma_j^+(\beta)\}_{j\in M_+}$ denote all the negative eigenvalues of the symmetric and compact operator (recalling the definitions  \eqref{Kbeta}-\eqref{mubeta})
$$K_\beta^+=(-\d_{xx}+\beta)^{-1}V_+:H^1(\R)\to H^1(\R).$$ 
We define the even function
\begin{equation*} 
 \Gamma_j^+(\lambda):= \gamma_j^+(1-\lambda^2), \quad \lambda\in (-1,1),
\end{equation*} 
and we are indeed searching for $\lambda_j\in (-1,1)$ such that
\begin{equation}\label{gammaj,lambdaj}
 \Gamma_j^+(\lambda_j)=-\frac{1}{1-\lambda_j}.
\end{equation}

By Lemma \ref{lem:N,mu}, the   function  $\Gamma_j^+: (-1,1)\to (-\infty,0)$ 
 is  strictly increasing in $\lambda\in (-1,0)$ while strictly decreasing in $\lambda\in (0,1)$, and for any $\lambda\in (-1,1)$, using \eqref{gamma,Linfty} and \eqref{gamma,Lip},
\begin{align}
|\Gamma_j^+(\lambda)|\leq \frac{\|(V_+)^-\|_{L^\infty}}{1-\lambda^2},\label{Gamma,Linfty}
\\
    \limsup_{\tilde\lambda\rightarrow\lambda}
    \frac{|\Gamma_j^+(\lambda)-\Gamma_j^+(\tilde\lambda)|}{|\lambda-\tilde\lambda|}
   & =\limsup_{\tilde\lambda\rightarrow\lambda}
    \frac{|\gamma_j^+(1-\lambda^2)-\gamma_j^+(1-\tilde\lambda^2)|}{|\lambda^2-\tilde\lambda^2|} |\lambda+\tilde\lambda|\notag
    \\
&    \leq \frac{2|\lambda|\cdot \|(V_+)^-\|_{L^\infty}}{(1-\lambda^2)^2}.\label{Gamma,Lip}
\end{align}
If $\|(V_+)^-\|_{L^\infty}<2$, then for all $\lambda\in (0,1)$, we derive from \eqref{Gamma,Lip} that 
\begin{align*} 
  \limsup_{\tilde\lambda\rightarrow\lambda}
    \frac{|\Gamma_j^+(\lambda)-\Gamma_j^+(\tilde\lambda)|}{|\lambda-\tilde\lambda|} 
    <\frac{4 \lambda }{(1-\lambda^2)^2}<\frac{1}{(1-\lambda)^2},
\end{align*} 
that is,
\begin{align}\label{below}
  \limsup_{\tilde\lambda\rightarrow\lambda}
    \frac{|\Gamma_j^+(\lambda)-\Gamma_j^+(\tilde\lambda)|}{|\lambda-\tilde\lambda|} 
    < \Bigl| \frac{d}{d\lambda}\Bigl( \frac{1}{1-\lambda}\Bigr) \Bigr|,
    \quad \forall \lambda\in (0,1).
\end{align}  

Notice that the function $-\frac{1}{1-\lambda}: [-1,1)\to (-\infty,-\frac12]$ is a strictly decreasing function of $\lambda\in [-1,1)$, with its  maximum $-\frac12$ evaluated at $\lambda=-1$.
Let $M_+'$ denote the subset of $M_+$ defined by
\begin{equation}
    M_+'=\{j\in M_+\,|\, \gamma_j^+(0_+)<-\frac12 \}.
\end{equation}
We claim that for any $j\in M_+'$, there exists a unique solution $\lambda_j\in (-1,1)$  of \eqref{gammaj,lambdaj}.
Indeed, on one side, for any $j\in M_+'$, 
$$\Gamma_j^+((-1)_+)=\gamma_j^+(0_+)<-\frac12=\Bigl(-\frac{1}{1-\lambda}\Bigr)\Big|_{\lambda=-1}.$$
On the other side, there exists $\lambda_0\in [0,1)$ so that
$$
\Gamma_j^+(\lambda_0)\geq  -\frac{1}{1-\lambda_0},
$$
since, using \eqref{Gamma,Linfty}, if $\|(V_+)^-\|_{L^\infty}\leq 1$, we can take $\lambda_0=0$ so that  
     $$
     \Gamma_j^+(\lambda_0)=-|\Gamma_j^+(\lambda_0)|\geq -\|(V_+)^-\|_{L^\infty}\geq -1= -\frac{1}{1-\lambda_0},
     $$
     while if $\|(V_+)^-\|_{L^\infty}\in (1,2)$, we can take $\lambda_0=\|(V_+)^-\|_{L^\infty}-1\in (0,1)$ so that 
     $$
     \Gamma_j^+(\lambda_0)=-|\Gamma_j^+(\lambda_0)|\geq -\frac{1+\lambda_0}{1-(\lambda_0)^2}= -\frac{1}{1-\lambda_0}.
     $$
     By the continuity of the functions $\Gamma_j^+$ and $-\frac{1}{1-\lambda}$, there exists $\lambda_j\in (-1,\lambda_0]$ so that \eqref{gammaj,lambdaj} holds.
    The uniqueness of $\lambda_j\in (-1,1)$ follows from the strict monotonicity properties of the functions  $\Gamma_j^+|_{(-1,0)}$, $\Gamma_j^+|_{(0,1)}$ and $-\frac{1}{1-\lambda}|_{[-1,1)}$ and \eqref{below}.

This unique solution $\lambda_j\in (-1,1)$ of \eqref{gammaj,lambdaj} is an eigenvalue of \eqref{ev,lambda,V+}, i.e. \eqref{ev,compact,V+}, with the eigenspace the same as the one associated with the negative eigenvalue $\gamma_j^+(1-\lambda_j^2)$ of the operator $K_{1-\lambda_j^2}^+$.
 
 Conversely, let $(\lambda, \phi)\in (-1,1)\times (H^1(\R)\backslash\{0\})$ denote any eigenpair of the eigenvalue problem   \eqref{ev,lambda,V+} and hence of \eqref{ev,compact,V+}, then $\phi$ is an eigenfunction associated with the (negative) eigenvalue $-\frac{1}{1-\lambda}\in (-\infty, -\frac12)$   of the operator $K_{1-\lambda^2}^+$.
 Thus there exists $j\in M_+$ so that \eqref{gammaj,lambdaj} holds:
 $$
 \gamma_j^+(1-\lambda^2)=-\frac{1}{1-\lambda}.
 $$
 Since $\gamma_j^+: (0,\infty)\to (-\infty,0)$ is strictly increasing, we have indeed $j\in M_+'$:
 \begin{align*}
     \gamma_j^+(0_+)<\gamma_j^+(1-\lambda^2)=-\frac{1}{1-\lambda}<-\frac12.
 \end{align*}
 
The case \eqref{CaseV-} can be considered analogously: We replace $V_+$  by $-V_-$, and for any $\beta>0$ we denote by $\{\gamma_j^-(\beta)\}_{j\in M_-}$   all the negative eigenvalues of the operator $K_\beta^-=(-\d_{xx}+\beta)^{-1}(-V_-):H^1(\R)\to H^1(\R).$   
 Let $\Gamma_j^-(\lambda)=\gamma_j^-(1-\lambda^2)$ for $\lambda\in (-1,1)$, and we search for $\lambda_j\in (-1,1)$ such that
 \begin{align}\label{gammaj,lambdaj,-}
     \Gamma_j^{-}(\lambda_j)=-\frac{1}{1+\lambda_j}.
 \end{align} 
For any $j\in M_-'=\{j\in M_-\,|\, \gamma_j^-(0_+)<-\frac12\}$, the existence of a unique solution $\lambda_j\in (-1,1)$ follows similarly as for the case \eqref{CaseV+}: The function $\Gamma_j^{-}: (-1,1)\mapsto (-\infty,0)$ is continuous, strictly increasing in $\lambda\in (-1,0)$, strictly decreasing in $\lambda\in (0,1)$ and bounded as $|\Gamma_j^-(\lambda)|\leq \frac{\|(V_-)^+\|_{L^\infty}}{1-\lambda^2}$, while  the function $-\frac{1}{1+\lambda}: (-1,1]\to (-\infty, -\frac12]$ is continuous and strictly increasing, with its maximum $-\frac12$ attained at $\lambda=1$, and they satisfy 
   \begin{align*} 
   & \Gamma_j^{-}(1_-)<-\frac12,\quad \Gamma_j^-(\lambda_0)\geq -\frac{1}{1+\lambda_0}\hbox{ for some }\lambda_0\in (-1,0],
   \\
&  \limsup_{\tilde\lambda\rightarrow\lambda}
    \frac{|\Gamma_j^-(\lambda)-\Gamma_j^-(\tilde\lambda)|}{|\lambda-\tilde\lambda|} 
    < \Bigl| \frac{d}{d\lambda}\Bigl( \frac{1}{1+\lambda}\Bigr) \Bigr|,
    \quad \forall \lambda\in (-1,0).
\end{align*}  
This unique solution $\lambda_j\in (-1,1)$ of \eqref{gammaj,lambdaj,-} is an eigenvalue of \eqref{ev,lambda,V-}, i.e. \eqref{ev,compact,V-}, with the eigenspace the same as the one associated with the negative eigenvalue $\gamma_j^-(1-\lambda_j^2)$ of the operator $K_{1-\lambda_j^2}^-$.
Conversely, for any eigenpair $(\lambda,\phi)\in (-1,1)\times H^1\backslash\{0\}$ of \eqref{ev,lambda,V-}, i.e. \eqref{ev,compact,V-}, $\phi$ is an eigenfunction associated with the eigenvalue $-\frac{1}{1+\lambda}\in (-\infty, -\frac12)$ of the operator $K_{1-\lambda^2}^-$, and hence there exists $j\in M_-'$ so that $\gamma_j^-(1-\lambda^2)=-\frac1{1+\lambda}$.

 To conclude, we have proved
 \begin{thm}[Eigenvalues inside $(-1,1)$ of $L,\cL$  when $u_-=-1$ or $u_+=1$]\label{thm:V-}
 Let $q(x)$ satisfies \eqref{Assumption}-\eqref{BC:q,varraphi}-\eqref{smallness:upm}.
 The following holds true:
 \begin{itemize}
     \item  If $u_-=-1$ and   $\{\gamma_j^+(\beta)\}_{j\in M_+'}$, $\beta>0$ denote all the  negative eigenvalues of the operator $K_{\beta}^+=(-\d_{xx}+\beta)^{-1}(u_+-1)$ such that $\gamma_j^+(0_+)<-\frac12$,
     then for all $j\in M_+'$, there exists a unique fixed point $\lambda_j\in (-1,1)$ of the mapping 
     $1+\frac1{\gamma_j^+(1-\lambda^2)}$, and $\{\lambda_j\}_{j\in M_+'}$ is the    set of eigenvalues of $L, \cL$ in  $(-1,1)$, with the eigenspace of $\lambda_j$ coinciding with the eigenspace associated with the negative eigenvalue $\gamma_j^+(1-\lambda_j^2)$ of the operator $K_{1-\lambda_j^2}^+$.
  \item If $u_+=1$ and   $\{\gamma_j^-(\beta)\}_{j\in M_-'}$, $\beta>0$ denote all the  negative eigenvalues of the operator $K_{\beta}^-=(-\d_{xx}+\beta)^{-1}(-u_--1)$ such that $\gamma_j^-(0_+)<-\frac12$,
     then for all $j\in M_-'$, there exists a unique fixed point $\lambda_j\in (-1,1)$ of the mapping 
     $-1-\frac1{\gamma_j^-(1-\lambda^2)}$, and $\{\lambda_j\}_{j\in M_-'}$ is the    set of eigenvalues of $L, \cL$ in  $(-1,1)$, with the eigenspace of $\lambda_j$ coinciding with the eigenspace associated with the negative eigenvalue $\gamma_j^-(1-\lambda_j^2)$ of the operator $K_{1-\lambda_j^2}^-$.  
 \end{itemize} 
 \end{thm} 

 \section*{Acknowledgments}
The authors are funded by the Deutsche Forschungsgemeinschaft (DFG, German Research 
Foundation) – Project-ID 258734477 – SFB 1173.
We thank Kaori Nagatou for her support on the computer-assisted proof in Example 3.4.

 \end{document}